\newtheorem{theorem}{Theorem}[section]
\newtheorem{corollary}{Corollary}[section]
\newtheorem{remark}{Remark}[section]
\newtheorem{example}{Example}[section]
\newtheorem{conjecture}{Conjecture}[section]
\begin{document}
\title
{\bf  Compact Dupin Hypersurfaces}
\author
{Thomas E. Cecil}
\maketitle

\begin{abstract}
A hypersurface $M$ in ${\bf R}^n$ is said to be {\em Dupin} if along
each curvature surface, the corresponding principal curvature is constant.  A Dupin
hypersurface is said to be {\em proper Dupin}
if the number of distinct principal curvatures is constant on $M$, i.e.,
each continuous principal curvature function has constant multiplicity on $M$.
These conditions are preserved by stereographic projection, so this theory is essentially the same for hypersurfaces in ${\bf R}^n$ or $S^n$.
The theory of compact proper Dupin hypersurfaces  in $S^n$
is closely related to the theory of isoparametric  hypersurfaces in $S^n$, and
many important results in this field concern relations between these two classes of hypersurfaces.
In 1985, Cecil and Ryan \cite[p. 184]{CR7} conjectured
that every compact, connected proper Dupin hypersurface
$M \subset S^n$ is equivalent to an isoparametric hypersurface in $S^n$
by a Lie sphere transformation.  This paper
gives a survey of progress on this conjecture and related developments.
\end{abstract}

\section{Dupin hypersurfaces}
\label{sec:1}
This paper is a survey of the main results in the theory of compact, connected proper Dupin hypersurfaces
 in Euclidean space ${\bf R}^n$, which
 began with the study of the cyclides of Dupin in ${\bf R}^3$ in a book by Dupin \cite{D} in 1822.
 This theory is closely related to the well-known theory of isoparametric  hypersurfaces in $S^n$, i.e., 
 hypersurfaces with constant principal curvatures in $S^n$, introduced by E. Cartan
 \cite{Car2}--\cite{Car5} and developed by many mathematicians (see 
 \cite{Cec9},  \cite{Chi-survey}, \cite{Th6}, \cite[pp. 85--184]{CR8} for surveys).  
 
 In fact, many results in this field
 concern conditions under which a compact proper Dupin hypersurface is equivalent to an
 isoparametric hypersurface in $S^n$ by a M\"{o}bius transformation or a Lie sphere transformation.
 In 1985, Cecil and Ryan \cite[p. 184]{CR7} conjectured
 (Conjecture \ref{cecil-ryan} below) that
 every compact, connected proper Dupin hypersurface
is equivalent to an isoparametric hypersurface in a sphere
by a Lie sphere transformation.  This paper
gives a survey of progress on this conjecture and related developments.

The theory of Dupin hypersurfaces is essentially the same in the two ambient spaces ${\bf R}^n$ and $S^n$,
since the Dupin property is preserved by stereographic projection $\tau :S^n - \{P\} \rightarrow {\bf R}^n$ with pole $P \in S^n$,
and its inverse map from ${\bf R}^n$ into $S^n$ (see, for example, \cite[pp. 132--151]{CR7},  
\cite[pp. 28--30]{CR8}).
In general, we will use whichever ambient space is most convenient to explain a certain concept.
Since much of the theory involves the relationship between Dupin hypersurfaces and
isoparametric hypersurfaces in spheres, we will first
formulate our definitions in terms of hypersurfaces in $S^n$.

Let $f:M \rightarrow S^n$
be an immersed hypersurface, and let $\xi$ be a locally defined
field of unit normals to $f(M)$.
A {\em curvature surface} of $M$ is a smooth submanifold $S\subset M$
such that for each point $x \in S$, the tangent space
$T_xS$ is equal to a principal space (i.e., an eigenspace) of the shape operator
$A$ of $M$ at $x$. This generalizes the classical notion of a line of curvature 
for a principal curvature of multiplicity one.

A hypersurface $M$ is said to be {\em Dupin} if:

\begin{enumerate}
\item[(a)] along each curvature surface, the corresponding principal curvature is constant.
\end{enumerate}
Furthermore, a Dupin hypersurface $M$ is called {\em proper Dupin} if, in addition to Condition (a),
the following condition is satisfied:
\begin{enumerate}
\item[(b)] the number $g$ of distinct principal curvatures is constant on
$M$.
\end{enumerate}

Clearly isoparametric hypersurfaces in $S^n$ are proper Dupin, and so are
those hypersurfaces in ${\bf R}^n$ obtained from isoparametric hypersurfaces in $S^n$ via
stereographic projection (see, for example, \cite[pp. 28--30]{CR8}).  
In particular, the well-known ring cyclides of Dupin in ${\bf R}^3$ are obtained
in this way from a standard product torus $S^1(r) \times S^1(s)$ in $S^3$, where $r^2+s^2=1$.

We now mention several basic facts about Dupin hypersurfaces 
(see, for example, \cite[pp. 9--35]{CR8} for proofs).  Let $f:M \rightarrow S^n$
be an immersed hypersurface, and let $\xi$ be a locally defined
field of unit normals to $f(M)$.
Using the Codazzi equation, one can show that if a curvature surface $S$ has dimension greater than one,
then the corresponding principal curvature is constant on $S$.  This is
not necessarily true on a curvature surface of dimension equal to one
(i.e., a line of curvature).

Second, Condition (b) is equivalent to requiring that each continuous
principal curvature function 
has constant multiplicity on $M$. Further, for any hypersurface $M$ in $S^n$, there exists a
dense open subset of $M$ on which 
the number of distinct principal
curvatures is locally constant (see, for example, Singley \cite{Sin}).

Next, it also follows from the Codazzi equation that if a continuous principal curvature function $\mu$ has constant
multiplicity $m$ on a connected open subset $U \subset M$, then $\mu$ is a smooth function, and
the distribution $T_{\mu}$ of principal
spaces corresponding to $\mu$ is a smooth foliation whose leaves are the curvature surfaces corresponding to 
$\mu$ on $U$. This principal curvature function
$\mu$ is constant along each of its curvature surfaces in $U$
if and only if these curvature surfaces are open subsets
of $m$-dimensional great or small spheres in $S^n$.  

Suppose that $\mu = \cot \theta$, for $0 < \theta < \pi$,
where $\theta$ is a smooth function on $U$.
The corresponding {\em focal map} $f_{\mu}$
which maps $x\in M$ to the focal point $f_{\mu}(x)$ is given by the formula,
\begin{equation}
\label{eq:focal-map}
f_{\mu}(x) = \cos \theta (x) \ f(x) + \sin \theta (x) \ \xi(x).
\end{equation}
The principal curvature $\mu$ also determines a second focal map
obtained by replacing $\theta$ by $\theta + \pi$ in equation (\ref{eq:focal-map}). 
The image of this second focal map is antipodal to the image of $f_{\mu}$.
The principal curvature function $\mu$ is constant along each of its curvature surfaces in $U$ if and only if
the focal map $f_{\mu}$ factors through an immersion of the
$(n-1-m)$-dimensional space of leaves $U/T_{\mu}$ into $S^n$, and so $f_{\mu}(U)$ is an $(n-1-m)$-dimensional
submanifold of $S^n$.

By definition, the {\em curvature sphere} $K_{\mu}(x)$ corresponding to
the principal curvature $\mu$ at a point $x \in U$ is the hypersphere in $S^n$ through $f(x)$ with one of its
centers at the focal point $f_{\mu}(x)$, and the other center at the focal point antipodal to $f_{\mu}(x)$.
Thus, $K_{\mu}(x)$ is tangent to $f(M)$ at $f(x)$.  It is easy to show that the 
principal curvature map $\mu$ is constant 
along each of its curvature surfaces if and only if the curvature sphere map $K_{\mu}$ is constant 
along each of these same curvature surfaces.  

In summary, on an open subset $U$ on which Condition (b) holds, Condition (a) is equivalent
to requiring that each curvature surface in each principal
foliation be an open subset of a metric sphere in $S^n$
of dimension equal to the multiplicity of the corresponding
principal curvature. Condition (a) is also equivalent to requiring that
along each curvature surface, the corresponding curvature sphere map is constant. Finally,  Condition (a)
is equivalent to requiring that for each principal curvature $\mu$, the image of the focal
map $f_{\mu}$ is a smooth submanifold of $S^n$ of codimension $m+1$, where $m$ is the multiplicity of $\mu$.

There exist many examples of Dupin hypersurfaces that are not proper Dupin, because the number of 
distinct principal curvatures is not constant on the hypersurface.
This also results in curvature surfaces that are not leaves of a 
principal foliation.  The following example due to Pinkall \cite{P4}
of a tube $M^3$ in ${\bf R}^4$ of constant radius over a torus of revolution 
$T^2 \subset {\bf R}^3 \subset {\bf R}^4$ illustrates these points.
This description of Pinkall's example is taken from the book \cite[p. 69]{Cec1}.

\begin{example}
\label{ex:3.4.6}
A  Dupin hypersurface that is not proper Dupin.\\
\noindent
{\rm Let $T^2$ be a torus of revolution 
in ${\bf R}^3$, and embed ${\bf R}^3$ into ${\bf R}^4 = {\bf R}^3 \times {\bf R}$.
Let $\eta$ be a field of unit normals to $T^2$ in ${\bf R}^3$.  Let $M^3$ be a 
tube of sufficiently small radius
$\varepsilon > 0$ around $T^2$ in ${\bf R}^4$, so that $M^3$ is a compact smooth embedded 
hypersurface in ${\bf R}^4$.
The normal space 
to $T^2$ in ${\bf R}^4$ at a point $x \in T^2$ is spanned by $\eta(x)$ and $e_4 = (0,0,0,1)$.
The shape operator $A_\eta$ of $T^2$ has two distinct principal curvatures at each point of $T^2$, while
the shape operator $A_{e_4}$ of $T^2$ is identically zero.  Thus the shape operator $A_\zeta$ for the normal
\begin{displaymath}
\zeta = \cos \theta \ \eta (x) + \sin \theta \ e_4
\end{displaymath}
at a point $x \in T^2$ is given by 
\begin{displaymath}
A_\zeta = \cos \theta \ A_{\eta(x)}.
\end{displaymath}
From the formulas for the 
principal curvatures of a tube
(see Cecil--Ryan \cite[p. 17]{CR8}), one finds that at all points of $M^3$ where
$x_4 \neq \pm \varepsilon$, there are three distinct principal curvatures of multiplicity one, which are constant 
along their corresponding lines of curvature
(curvature surfaces of dimension one). However, on the two tori,
$T^2 \times \{ \pm \varepsilon\}$, the principal curvature $\kappa = 0$ has multiplicity two.  These two tori
are curvature surfaces for this principal curvature, since the principal space corresponding to $\kappa$ is
tangent to each torus at every point.  $M^3$ is a Dupin hypersurface in ${\bf R}^4$, but it is not proper Dupin,
since the number of distinct principal curvatures is not constant on $M^3$.  The two tori 
$T^2 \times \{ \pm \varepsilon\}$ are curvature surfaces, but they are not leaves of a principal foliation on $M^3$.}
\end{example}

One consequence of the results mentioned above
is that proper Dupin hypersurfaces are algebraic, as is the case with isoparametric hypersurfaces,
as shown by M\"{u}nzner \cite{Mu}--\cite{Mu2}.
This result is most easily formulated for hypersurfaces in ${\bf R}^n$.  It states
that a connected proper Dupin hypersurface 
$f:M \rightarrow {\bf R}^n$ must be contained in a connected component of an 
irreducible algebraic subset of ${\bf R}^n$ of dimension $n-1$.  Pinkall \cite{P6} sent the author a letter in 1984 that contained a sketch of a proof of this result, but he did not publish a proof.  In 2008,
Cecil, Chi and Jensen \cite{CCJ3} used methods of real
algebraic geometry to give a proof of this result
based on Pinkall's sketch. The proof makes use of the various principal foliations whose leaves are open subsets of spheres to construct an analytic algebraic parametrization of a 
neighborhood of $f(x)$ for each point $x \in M$. 
In contrast to the situation for isoparametric hypersurfaces, however, a connected proper Dupin hypersurface 
in $S^n$ does not necessarily lie in a compact connected proper Dupin hypersurface,
as Example \ref{ex:3.4.6} illustrates.

The definition of Dupin can be extended to submanifolds of
codimension greater than one as follows.  Let
$\phi:V \rightarrow {\bf R}^n$ (or $S^n$) be a submanifold of
codimension greater than one, and let $B^{n-1}$ denote the
unit normal bundle of $\phi(V)$.  In this case, a {\em curvature
surface} (see Reckziegel \cite{Reck2}) is defined to be a connected submanifold $S \subset V$ for which
there is a parallel section $\eta : S \rightarrow B^{n-1}$
such that for each $x \in S$, the tangent space $T_xS$ is
equal to some smooth eigenspace of the shape operator
$A_{\eta}$. The submanifold $\phi(V)$ is said to be {\em Dupin}
if along each curvature surface, the corresponding principal
curvature of $A_{\eta}$ is constant. A Dupin submanifold is {\em proper Dupin} if the number
of distinct principal curvatures is constant
on the unit normal bundle $B^{n-1}$.  

Terng \cite{Te1} generalized the notion of an isoparametric hypersurface
to submanifolds of codimension greater than one, and she
proved that an isoparametric submanifold of codimension greater than one is always
Dupin, but it may not be proper Dupin \cite[pp. 464--469]{Te2}.
In related results, Pinkall \cite[p. 439]{P4} proved that every extrinsically 
symmetric submanifold of a real space form is Dupin.  Then Takeuchi
\cite{Tak} determined which of these are proper Dupin.

\section{Submanifolds in Lie sphere geometry}
\label{sec:2}
Many of the important results described in this paper concern finding conditions under which a compact 
proper Dupin hypersurface in $S^n$ is equivalent by a Lie sphere transformation to an isoparametric hypersurface
in $S^n$.  To make this precise, we now give a brief description of the method for studying hypersurfaces
in ${\bf R}^n$ or $S^n$ within the context of Lie sphere geometry
(introduced by Lie \cite{Lie}).  The reader is referred to the papers of
Pinkall \cite{P4}, Chern \cite{Chern}, Cecil and Chern \cite{CC1}--\cite{CC2}, Cecil and Jensen \cite{CJ2},
or the books of Cecil \cite{Cec1}, or
Jensen, Musso and Nicolodi \cite{Jensen-Musso-Nicolodi}, for more detail.

Lie sphere geometry is situated in real projective space ${\bf P}^n$,
so we now briefly review some concepts and notation from projective geometry.
We define an equivalence relation on ${\bf R}^{n+1} - \{0\}$ by setting
$x \simeq y$ if $x = ty$ for some nonzero real number $t$.  We denote
the equivalence class determined by a vector $x$ by $[x]$.  Projective
space ${\bf P}^n$ is the set of such equivalence classes, and it can
naturally be identified with the space of all lines through the origin
in ${\bf R}^{n+1}$.  The rectangular coordinates $(x_1, \ldots, x_{n+1})$
are called {\em homogeneous coordinates}
of the point $[x]$ in ${\bf P}^n$, and they
are only determined up to a nonzero scalar multiple.  

A {\em Lie sphere} in $S^n$ is an oriented hypersphere or a point sphere in $S^n$.
The set of all Lie spheres
is in bijective correspondence with the set of all points $[x] = [(x_1,\ldots,x_{n+3})]$
in projective space ${\bf P}^{n+2}$ that lie
on the quadric hypersurface $Q^{n+1}$ 
determined by the equation 
$\langle x, x \rangle = 0$, where
\begin{equation}
\label{Lie-metric}
\langle x, y \rangle = -x_1 y_1 + x_2 y_2 + \cdots +x_{n+2} y_{n+2} - x_{n+3} y_{n+3}
\end{equation}
is a bilinear form of signature $(n+1,2)$ on ${\bf R}^{n+3}$.  
We let  ${\bf R}^{n+3}_2$ denote ${\bf R}^{n+3}$ endowed with
the indefinite inner product (\ref{Lie-metric}).  The quadric 
$Q^{n+1} \subset {\bf P}^{n+2}$  is called the {\em Lie quadric}.

The specific correspondence is as follows.  We identify $S^n$ with
with the unit sphere in ${\bf R}^{n+1} \subset {\bf R}^{n+3}_2$, where ${\bf R}^{n+1}$ is spanned by the 
standard basis vectors $\{e_2,\ldots,e_{n+2}\}$ in ${\bf R}^{n+3}_2$.
Then the oriented hypersphere with center $p \in S^n$ and
signed radius $\rho$ corresponds to the point in $Q^{n+1}$ with homogeneous coordinates,
\begin{equation}
\label{eq:1.4.4}
(\cos \rho , p, \sin \rho ),
\end{equation}
where $ - \pi < \rho < \pi$.

We can designate the orientation of the sphere by the sign of
$\rho$ as follows.  A positive radius $\rho$ in 
(\ref{eq:1.4.4}) corresponds to the orientation of the sphere given by 
the field of unit normals which are tangent vectors to geodesics in $S^n$ 
going from $-p$ to $p$, and a negative radius corresponds
to the opposite orientation.
Each oriented sphere can be considered in two ways, with center $p$ and signed radius $\rho, - \pi < \rho < \pi$,
or with center $-p$ and the appropriate signed radius $\rho \pm \pi$.
Point spheres $p$ in $S^n$ correspond to those points $[(1, p, 0)]$ in $Q^{n+1}$ with radius $\rho = 0$.

Due to the signature of the metric $\langle \ ,\ \rangle$, the
Lie quadric $Q^{n+1}$ contains projective lines but no linear
subspaces of ${\bf P}^{n+2}$ of higher dimension (see, for example, \cite[p. 21]{Cec1}).  
A straightforward calculation
shows that if $[x]$ and $[y]$ are two points on the quadric, 
then the line $[x,y]$ 
lies on $Q^{n+1}$ if and only if $\langle x,y \rangle=0$.  Geometrically,
this condition means that the hyperspheres in $S^n$ corresponding
to $[x]$ and $[y]$ are in oriented contact, i.e., they are tangent to each other
and have the same orientation at the point of contact.
For a point sphere and an oriented sphere, oriented contact means that the point lies on the sphere.
The 1-parameter family
of Lie spheres  in $S^n$ corresponding to the points on a line on the Lie quadric is called a 
{\em parabolic pencil of spheres}.

If we wish to work in ${\bf R}^n$, the set of {\em Lie spheres} consists of all oriented hyperspheres, oriented hyperplanes, and point spheres in ${\bf R}^n \cup \{ \infty\}$.  As in the spherical case,
we can find a bijective correspondence between the set of all Lie spheres and the set of
points on $Q^{n+1}$, and the notions of oriented contact and parabolic pencils of
Lie spheres are defined in a natural way (see, 
for example, \cite[pp. 14--23]{Cec1}).

A {\em Lie sphere transformation} 
is a projective transformation of ${\bf P}^{n+2}$ which maps the Lie quadric $Q^{n+1}$
to itself.  In terms of the geometry of ${\bf R}^n$ (or $S^n$), 
a Lie sphere transformation maps Lie spheres to Lie spheres.
Furthermore, since a Lie sphere transformation maps lines on $Q^{n+1}$ to lines on $Q^{n+1}$, 
a Lie sphere transformation preserves oriented contact of Lie spheres  (see Pinkall \cite[p. 431]{P4}
or \cite[pp. 25--30]{Cec1}).

The group of Lie sphere transformations is
isomorphic to $O(n+1,2)/ \{ \pm I \}$, where $O(n+1,2)$
is the orthogonal group for the metric in equation (\ref{Lie-metric}).
A Lie sphere transformation that takes point spheres to point spheres is a {\em M\"{o}bius transformation}, i.e.,
it is induced by a conformal diffeomorphism of $S^n$, and the set of all M\"{o}bius transformations is a subgroup of the Lie sphere group.  
An example of a Lie sphere transformation that is not
a M\"{o}bius transformation is a parallel transformations $P_t$, which fixes the center of each Lie
sphere but adds $t$ to its signed radius (see \cite[pp. 25--49]{Cec1}).

The $(2n-1)$-dimensional manifold $\Lambda^{2n-1}$ of projective lines on the quadric $Q^{n+1}$ has a 
contact structure, i.e., a
$1$-form $\omega$ such that $\omega \wedge (d\omega)^{n-1}$ does not vanish on $\Lambda^{2n-1}$.  The condition $\omega = 0$ defines a codimension one distribution $D$ on $\Lambda^{2n-1}$ which has 
integral submanifolds
of dimension $n-1$, but none of higher dimension.  Such an integral  submanifold 
$\lambda: M^{n-1} \rightarrow \Lambda^{2n-1}$ of $D$ of dimension $n-1$ is called a 
{\em Legendre submanifold} (see \cite[pp. 51--64]{Cec1}).

An oriented hypersurface $f:M^{n-1} \rightarrow S^n$ with field of unit
normals $\xi :M^{n-1} \rightarrow S^n$ naturally induces
a Legendre submanifold $\lambda = [k_1, k_2]$, where 
\begin{equation}
k_1 = (1,f,0), \quad k_2 = (0,\xi ,1),
\end{equation}
in homogeneous coordinates.
For each $x \in M^{n-1}, [k_1(x)]$ is the unique point sphere and
$[k_2 (x)]$ is the unique great sphere in the parabolic pencil of spheres in $S^n$ corresponding to 
the points on the line $\lambda (x)$.
Similarly, an immersed submanifold $\phi :V \rightarrow S^n$ of codimension 
greater than one induces a Legendre submanifold 
whose domain is the bundle $B^{n-1}$ of unit normal
vectors to $\phi (V)$.  In each case, $\lambda$ is called the {\em Legendre lift} of the submanifold in $S^n$.
In a similar way, a submanifold of  ${\bf R}^n$ naturally induces a Legendre submanifold.

If $\beta$  is a Lie sphere transformation, then $\beta$ maps lines on
$Q^{n+1}$ to lines on $Q^{n+1}$, so it naturally
induces a map $\tilde{\beta }$ from 
$\Lambda ^{2n-1}$ to itself.  If $\lambda$ is a
Legendre submanifold, then one can show that $\tilde{\beta }\lambda$
is also a Legendre submanifold, which is denoted as
$\beta \lambda$ for short.  These two Legendre
submanifolds are said to be {\em Lie equivalent}.
We will also say that two submanifolds of $S^n$ or ${\bf R}^n$
are Lie equivalent, if their corresponding Legendre lifts
are Lie equivalent. 
 
If $\beta$ is a M\"{o}bius transformation, then the two Legendre
submanifolds  $\lambda$ and $\beta \lambda$ are said to be {\em M\"{o}bius equivalent}.
Finally, if $\beta$ is the parallel transformation $P_t$ and
$\lambda$ is the Legendre lift of an
oriented hypersurface $f:M \rightarrow S^n$, then
$P_t\lambda$ is the Legendre lift of the
parallel hypersurface $f_{-t}$ at oriented distance $-t$ from $f$ (see, for example, \cite[p.67]{Cec1}).

It is easy to generalize the definitions of Dupin and proper Dupin hypersurfaces in $S^n$ to the class of Legendre submanifolds in Lie sphere geometry.  We simply replace the notion of a principal curvature (which is not Lie invariant)
with the notion of a curvature sphere (which is Lie invariant).
We then say that a Legendre submanifold $\lambda: M^{n-1} \rightarrow \Lambda^{2n-1}$
is a {\em Dupin submanifold} if:

\begin{enumerate}
\item[(a)] along each curvature surface, the corresponding
curvature sphere map is constant.
\end{enumerate}
Furthermore, a Dupin submanifold $\lambda$ is called {\em proper Dupin} if, in addition 
to Condition (a), the following condition is satisfied:

\begin{enumerate}
\item[(b)] the number $g$ of distinct curvature spheres is constant on $M$.
\end{enumerate}

One can easily show that a Lie sphere transformation $\beta$ maps curvature spheres of $\lambda$ to curvature
spheres of $\beta \lambda$, and that Conditions (a) and (b) are preserved by $\beta$ (see \cite[pp.67--70]{Cec1}).  
Thus, both the Dupin and proper Dupin properties are invariant under Lie sphere transformations.

\section{Local constructions}
\label{sec:3}

In this section, we discuss Pinkall's  \cite{P4} method for constructing local examples of proper Dupin hypersurfaces in Euclidean space that have an arbitrary number of distinct principal curvatures with any given
multiplicities.  Specifically, these are constructions
for obtaining a Dupin hypersurface $W$ in ${\bf R}^{n+m}$
from a Dupin hypersurface $M$ in ${\bf R}^n$.  
We first describe these constructions in the case $m=1$ as follows.

Begin with a Dupin hypersurface
$M^{n-1}$ in ${\bf R}^n$ and then consider ${\bf R}^n$
as the linear subspace ${\bf R}^n \times \{ 0 \}$
in ${\bf R}^{n+1}$.  The following 
constructions yield a Dupin hypersurface $W^n$ in
${\bf R}^{n+1}$.
\begin{eqnarray}
\label{eq:5.97}
&i.& W^n\  {\rm is\ a\ cylinder}\ M^{n-1} \times {\bf R} \ {\rm in}\ {\bf R}^{n+1}.           \nonumber \\
&ii.& W^n\ {\rm is\ the\ hypersurface\ in}\ {\bf R}^{n+1}\ {\rm obtained\ by\ rotating}\  M^{n-1}\ \nonumber \\
&   & {\rm around\  an\  axis}\ {\bf R}^{n-1} \subset {\bf R}^n. \\
&iii.& W^n\ {\rm is\ a\ tube\ of\ constant\ radius\ around}\ M^{n-1} \ {\rm in}\ {\bf R}^{n+1}.\nonumber\\
&iv.& {\rm Project}\ M^{n-1}\ {\rm stereographically\ onto\ a\ hypersurface}\  V^{n-1} \subset S^n \nonumber\\
&   & {\rm in}\ {\bf R}^{n+1}. \  W^n \ {\rm is\ the\ cone\  over}\  V^{n-1} \ {\rm in}\ {\bf R}^{n+1}.\nonumber 
\end{eqnarray}

In general, these constructions introduce a new principal curvature 
of multiplicity one on $W^n$ which is constant along its lines 
of curvature.  The other principal curvatures are determined by the 
principal curvatures of $M^{n-1}$, and the Dupin property is preserved
for these principal curvatures.  Thus, 
if $M^{n-1}$ is a proper Dupin hypersurface in ${\bf R}^n$
with $g$ distinct principal curvatures, then in general, $W^n$ is a 
proper Dupin hypersurface in ${\bf R}^{n+1}$ with $g+1$ distinct principal curvatures.
However, this is not always the case, as there are cases where the number
of distinct principal curvatures of $W^n$ does not equal $g+1$ at some points, as we will discuss after the proof of
Theorem \ref{thm:4.1.1} below.   These constructions can be
generalized to produce a new principal curvature of multiplicity
$m$ by considering ${\bf R}^n$ as a subset of ${\bf R}^n \times
{\bf R}^m$ rather than ${\bf R}^n \times {\bf R}$.

Although Pinkall gave these four constructions, he showed \cite[p. 438]{P4} 
that the cone construction is redundant, since it
is Lie equivalent
to the tube construction (see also \cite[p.144]{Cec1}). Thus, we often work with just the cylinder, 
surface of revolution, and tube constructions, as in \cite{Cec1}.

Using these constructions, Pinkall \cite{P4} showed how
to produce a proper Dupin hypersurface in Euclidean space with an
arbitrary number of distinct principal curvatures, each with any given multiplicity as follows.

\begin{theorem}
\label{thm:4.1.1} 
(Pinkall, 1985)
Given positive integers $m_1,\ldots,m_g$ with 
\begin{displaymath}
m_1 + \cdots + m_g = n-1,
\end{displaymath}
there exists a proper Dupin hypersurface
in ${\bf R}^n$ with $g$ distinct principal curvatures having respective multiplicities $m_1,\ldots,m_g$.
\end{theorem}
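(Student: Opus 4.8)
The plan is to prove this by induction on the number $g$ of distinct principal curvatures, using the local constructions $(i)$--$(iv)$ as the inductive engine. The base case $g=1$ is straightforward: a hypersurface in ${\bf R}^n$ with a single principal curvature of multiplicity $m_1 = n-1$ is an open subset of a hyperplane (the curvature $\kappa = 0$ case) or an open subset of a round sphere (the case $\kappa \neq 0$), both of which are proper Dupin. For the inductive step, suppose the result holds for $g-1$, so that we are given a proper Dupin hypersurface $M^{n-m_g-1} \subset {\bf R}^{n-m_g}$ with $g-1$ distinct principal curvatures of multiplicities $m_1, \ldots, m_{g-1}$, where these satisfy $m_1 + \cdots + m_{g-1} = (n-m_g) - 1$.

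Next I would apply the generalized construction that introduces a single new principal curvature of multiplicity $m_g$. As noted in the excerpt, the constructions $(i)$--$(iv)$ generalize from adding a multiplicity-one principal curvature to adding one of multiplicity $m$ by regarding ${\bf R}^{n-m_g}$ as a subset of ${\bf R}^{n-m_g} \times {\bf R}^{m_g} = {\bf R}^n$. Concretely, I would build a tube of constant radius around $M^{n-m_g-1}$ inside ${\bf R}^n$, or equivalently use the cylinder construction $M^{n-m_g-1} \times {\bf R}^{m_g}$. The key claim, supplied by the analysis preceding the theorem, is that such a construction produces a new principal curvature of constant value (for instance $\kappa = 0$ for the cylinder) with multiplicity exactly $m_g$, constant along its curvature surfaces, while preserving the Dupin property of the inherited principal curvatures coming from $M^{n-m_g-1}$. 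This yields a hypersurface $W^{n-1} \subset {\bf R}^n$ that is Dupin with $g$ distinct principal curvatures of the desired multiplicities.

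The main obstacle is the caveat flagged in the excerpt: the constructions do \emph{not} always yield $g$ distinct principal curvatures at every point, because the newly introduced curvature may coincide with one of the inherited curvatures at certain points, causing the count to drop and destroying the \emph{proper} Dupin condition. To handle this I would choose the construction parameters (the tube radius, or the positioning of the axis/subspace) so that the new constant principal curvature is distinct from all of the inherited ones throughout the relevant region. Since the inherited principal curvatures are smooth functions with known ranges on $M^{n-m_g-1}$, and the new curvature is a single constant that I control, I can select the radius to avoid the finitely many values that would cause coincidence, at least on a suitable open subset where all $g$ curvatures remain distinct. It is on this open subset that $W^{n-1}$ is genuinely proper Dupin.

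Finally, I would note that the theorem asserts only \emph{existence} of a proper Dupin hypersurface, not a compact or complete one, so it suffices to exhibit the construction on an open subset where the multiplicities are exactly $m_1, \ldots, m_g$ and the count $g$ is constant. Verifying that the inherited principal curvatures retain their constancy along curvature surfaces under each construction is the routine part, following from the explicit formulas for principal curvatures of tubes and cylinders referenced in \cite[p. 17]{CR8}; I would not grind through these. The genuinely delicate point remains the avoidance of curvature coincidences, which is exactly why the local, open-subset formulation of the theorem is the natural one.
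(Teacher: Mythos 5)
Your overall strategy---induction on $g$, adding one principal curvature of prescribed multiplicity at each step via Pinkall's constructions, and retreating to an open subset to preserve properness---is the same as the paper's, which presents the induction informally through explicit examples (torus, then cylinders $U \times {\bf R}^m$, with the multiplicity-$m$ generalization). The substantive difference is the mechanism for keeping the new principal curvature distinct from the inherited ones. The paper uses \emph{only} the cylinder construction, and before each step inverts the hypersurface in a suitably chosen sphere: this M\"{o}bius transformation preserves the proper Dupin condition and is chosen so the image contains an open subset on which no principal curvature vanishes, whence the cylinder's new curvature, identically $0$, is genuinely new there. You omit this inversion, and your assertion that one may ``equivalently use the cylinder construction'' with parameters chosen to avoid coincidence is the weak point of your write-up: the cylinder has no tunable parameter---its new curvature is identically $0$---and after any cylinder step the resulting hypersurface has an identically zero principal curvature, so at the next cylinder step no open-subset restriction can make $0$ a new distinct curvature; one only grows the multiplicity of the zero curvature (which is exactly how the paper produces the multiplicities $(1,1,2)$, deliberately). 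A cylinder-only induction therefore stalls without the inversion trick, which is the one idea of the paper's proof missing from your proposal.

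Your primary route, the tube, does close the induction, but for a different reason than the one you give. For a tube of radius $\varepsilon$ over $M \subset {\bf R}^{n-m} \subset {\bf R}^n$, the inherited curvatures are of the form $\cos\theta\,\kappa_i/(1-\varepsilon\cos\theta\,\kappa_i)$ while the new curvature is the constant $-1/\varepsilon$ of multiplicity $m$; the equation $\cos\theta\,\kappa_i/(1-\varepsilon\cos\theta\,\kappa_i) = -1/\varepsilon$ has no solution at all, so the new curvature never collides with an inherited one, for \emph{any} radius---the radius matters only for avoiding singularities, i.e., keeping $1-\varepsilon\cos\theta\,\kappa_i \neq 0$. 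The actual degeneration, spelled out in the paper's discussion of construction (\ref{eq:5.97}iii) and in Example \ref{ex:3.4.6}, occurs where $\cos\theta = 0$ (the set corresponding to $M \times \{\pm\varepsilon\}$): there \emph{all the inherited curvatures merge with each other} at the value $0$, leaving exactly two distinct curvatures. So your diagnosis (``the newly introduced curvature may coincide with one of the inherited curvatures,'' avoidable by excluding ``finitely many values'' of a parameter) misidentifies the failure mode---the inherited curvature functions have whole intervals as ranges, and it is they that collide with one another, not with the constant you control. Your fallback clause, restricting to a suitable open subset where all $g$ curvatures stay distinct, does repair this (delete the locus $\cos\theta = 0$), so with the tube analysis corrected, or with the paper's sphere-inversion step restored for the cylinder, your induction is sound; and you are right that the open, local formulation is all Theorem \ref{thm:4.1.1} demands.
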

\begin{proof}
The proof is by an inductive construction, which will be clear once we do the first few examples.  To begin, note
that a usual torus of revolution
in ${\bf R}^3$ is a proper Dupin hypersurface with two principal curvatures.  To
construct a proper Dupin hypersurface $M^3$ in ${\bf R}^4$ with three principal curvatures of multiplicity one,
begin with an open subset $U$ of a torus of revolution in ${\bf R}^3$ on which neither principal curvature vanishes.
Take $M^3$ to be the cylinder
$U \times {\bf R}$ in ${\bf R}^3 \times {\bf R} = {\bf R}^4$.  Then $M^3$ has three
distinct principal curvatures at each point, one of which is identically zero.  These are clearly constant along their
corresponding 1-dimensional curvature surfaces
(lines of curvature).

To get a proper Dupin hypersurface in ${\bf R}^5$ with three principal curvatures having respective multiplicities
$m_1 = m_2 = 1$, $m_3 =2$, one simply takes the cylinder
\begin{displaymath}
U \times {\bf R}^2 \subset {\bf R}^3 \times {\bf R}^2 = {\bf R}^5,
\end{displaymath}
where $U$ is the open subset of the torus defined above.
To obtain a proper Dupin hypersurface $M^4$ in ${\bf R}^5$ with four principal curvatures of multiplicity one, 
first invert the hypersurface
$M^3$ defined above in a 3-sphere in ${\bf R}^4$ chosen so that the image of $M^3$ contains an open subset $W^3$ on which no
principal curvature vanishes.  The hypersurface $W^3$ is proper Dupin, since the proper Dupin property is preserved by
M\"{o}bius transformations.  Now take $M^4$ to be the cylinder
$W^3 \times {\bf R}$ in ${\bf R}^4 \times {\bf R} = {\bf R}^5$. Then $M^4$ is
a proper Dupin hypersurface in ${\bf R}^5$ with four principal curvatures of multiplicity one.
\end{proof}

In general, there are problems in trying to produce compact proper Dupin hypersurfaces by using the
constructions in equation (\ref{eq:5.97}).  We now examine some of the problems involved
with the the cylinder, 
surface of revolution, and tube constructions individually (see \cite[pp. 127--141]{Cec1} for more details).

For the cylinder construction (\ref{eq:5.97}i),
the new principal curvature of $W^n$ is identically zero, while 
the other principal curvatures of $W^n$ are equal to those
of $M^{n-1}$.  Thus, if one of the principal curvatures $\mu$
of $M^{n-1}$ is zero at some points but not identically
zero, then the number of distinct principal curvatures 
is not constant on $W^n$, and so $W^n$ is Dupin but not proper Dupin.  

For the surface of revolution construction (\ref{eq:5.97}ii), if the focal point corresponding to a principal curvature
$\mu$ at a point $x$ of the profile submanifold $M^{n-1}$ lies on the axis of revolution ${\bf R}^{n-1}$, then
the principal curvature of $W^n$ at $x$ 
determined by $\mu$ is equal to the new principal curvature of $W^n$ resulting from
the surface of revolution construction.
Thus, if the focal point of $M^{n-1}$ corresponding to $\mu$ lies
on the axis of revolution for some but not all points of  $M^{n-1}$, then $W^n$ is not proper Dupin.

If $W^n$ is a tube (\ref{eq:5.97}iii) in ${\bf R}^{n+1}$ of radius
$\epsilon$ over $M^{n-1}$, then there are exactly two
distinct principal curvatures at the points in the set
$M^{n-1} \times \{ \pm \epsilon \}$ in $W^n$, regardless
of the number of distinct principal curvatures on $M^{n-1}$.
Thus, $W^n$ is not a proper Dupin hypersurface unless the original hypersurface $M^{n-1}$ is totally umbilic, i.e., it
has only one distinct principal curvature at each point.

Another problem with these constructions is that they may not yield
an immersed hypersurface in ${\bf R}^{n+1}$. In the tube construction, if the
radius of the tube is the reciprocal of one of the 
principal curvatures of $M^{n-1}$ at some point, then the constructed object has a singularity.  For the
surface of revolution construction, a singularity occurs
if the profile submanifold $M^{n-1}$ intersects the axis of revolution. 

Many of the issues mentioned in the preceding paragraphs can be resolved by working
in the context of Lie sphere geometry and considering
Legendre lifts of hypersurfaces in Euclidean space (see \cite[pp.127--141]{Cec1}).
In that context, a proper Dupin submanifold 
$\lambda : M^{n-1} \rightarrow \Lambda ^{2n-1}$ is said to be {\em reducible} if it is
is locally Lie equivalent to the Legendre lift of a hypersurface in ${\bf R}^n$ obtained by one of Pinkall's constructions in equation (\ref{eq:5.97}).

Pinkall \cite{P4} found a useful characterization of reducibility in the context of Lie sphere 
geometry when he proved that a proper Dupin submanifold
$\lambda : M^{n-1} \rightarrow \Lambda ^{2n-1}$ is reducible if and only if
the image of one its curvature sphere maps $K$ lies in a linear subspace of codimension two in ${\bf P}^{n+2}$
(see also \cite[pp. 141--148]{Cec1}).

One can obtain a reducible compact proper Dupin hypersurface with two principal curvatures 
by revolving a circle $C$ in ${\bf R}^3$ about an axis ${\bf R}^1 \subset {\bf R}^3$ that is disjoint from $C$ 
to obtain a torus of revolution.
However, Cecil, Chi and Jensen \cite{CCJ2} (see also \cite[pp. 146--147]{Cec1}) showed that 
every compact proper Dupin hypersurface with more than two principal curvatures is irreducible.
\noindent
\begin{theorem}
\label{thm:CCJ-2007}
(Cecil-Chi-Jensen, 2007)
If $M^{n-1} \subset {\bf R}^n$ is a compact, connected proper Dupin hypersurface with $g \geq 3$
principal curvatures, then $M^{n-1}$ is irreducible.
\end{theorem}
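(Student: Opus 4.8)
The plan is to argue by contradiction, using Pinkall's Lie-geometric characterization of reducibility together with the global constraints forced by compactness. First I would pass to the Legendre lift $\lambda : M^{n-1} \to \Lambda^{2n-1}$. Since $M$ is compact, connected and proper Dupin, the multiplicities $m_1,\dots,m_g$ are constant, so the $g$ curvature sphere maps $K_1,\dots,K_g$ form disjoint smooth families and (after passing to a connected cover if necessary) may be labeled consistently as global smooth maps on $M$. Because curvature spheres and the Dupin and proper Dupin conditions are all invariant under Lie sphere transformations, it suffices to reach a contradiction at the level of $\lambda$. Suppose then that $M$ is reducible. By Pinkall's criterion, some curvature sphere map, say $K_1$, has image contained in a codimension-two projective subspace $\mathbf{P}(V)\subset\mathbf{P}^{n+2}$; equivalently there exist linearly independent $v_1,v_2\in\mathbf{R}^{n+3}_2$ with $\langle K_1(x),v_1\rangle=\langle K_1(x),v_2\rangle=0$ for every $x\in M$. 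Write $E=\mathrm{span}\{v_1,v_2\}$, a $2$-plane, and recall $\langle K_1(x),K_1(x)\rangle=0$.

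The next step is to normalize $E$ by a Lie sphere transformation according to the causal type of the restricted form $\langle\,,\,\rangle|_E$ --- spacelike, Lorentzian, or degenerate. Each normal form realizes $\beta\lambda$, for a suitable Lie sphere transformation $\beta$, as the Legendre lift of a hypersurface obtained by one of Pinkall's three essential constructions in equation (\ref{eq:5.97}): a cylinder, a surface of revolution, or a tube (the cone construction being Lie-equivalent to a tube). In this description $K_1$ is exactly the curvature sphere introduced by the construction, while the constraints $\langle K_1,v_j\rangle=0$ force the line $\lambda(x)$ and the remaining curvature spheres $K_2,\dots,K_g$ to meet $E$ in a controlled fashion, so that the configuration descends to a Legendre submanifold $\mu$ in the Lie sphere geometry of one lower dimension carrying precisely the $g-1$ curvature spheres $K_2,\dots,K_g$.

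It then remains to extract a contradiction from the hypotheses $g\ge 3$ and compactness. The decisive feature, already visible in the discussion preceding this theorem, is that each of Pinkall's constructions would force the number of distinct curvature spheres to fall below $g$ on a nonempty subset of $M$, or else would force noncompactness: a cylinder $N\times\mathbf{R}$ is never compact; a tube over $N$ has only two distinct curvature spheres on the nonempty locus $N\times\{\pm\epsilon\}$; and a surface of revolution degenerates wherever the new focal point meets the axis, which must occur on a compact profile. Since $M$ is compact with exactly $g\ge 3$ distinct curvature spheres at every point, each alternative is impossible, and this contradiction shows that $M$ must be irreducible.

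The main obstacle is the passage from local to global in the previous two steps. Reducibility as defined is only \emph{local} Lie equivalence to a construction, and Lie sphere transformations do not preserve compactness of the projected hypersurface in $S^n$ or $\mathbf{R}^n$; the argument must therefore be carried out intrinsically on the fixed compact domain $M$ using the Lie-invariant curvature sphere maps, and one must prove that the degeneracy locus dictated by the construction is genuinely nonempty. Establishing that this locus cannot be empty when $M$ is compact is the crux, and it is precisely here that one would invoke the algebraicity of compact proper Dupin hypersurfaces, analyzing the reduced submanifold $\mu$ and the behaviour of $K_1$ relative to $E$ globally rather than merely on a coordinate patch.
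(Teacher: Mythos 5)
There is a genuine gap, and you have in fact located it yourself: your final paragraph concedes that the ``crux'' --- showing the degeneracy locus forced by each construction is nonempty on the compact $M$ --- is not established, and an appeal to algebraicity is named but not carried out. The gap is not merely technical. In the surface-of-revolution case your dichotomy is wrong as stated: for a surface of revolution the focal point of the \emph{new} principal curvature always lies on the axis (the normal line meets the axis there); the failure of properness occurs only where a focal point of one of the \emph{old} principal curvatures from the profile hits the axis, and compactness of the profile does not force this. The torus of revolution is the standard witness: it is a compact, reducible proper Dupin hypersurface ($g=2$) obtained by the revolution construction with no degeneracy anywhere. Since the theorem is false for $g=2$, any correct proof must use $g\geq 3$ in an essential way, and your argument never does at the decisive step --- nothing in ``cylinder is noncompact / tube degenerates / revolution degenerates'' distinguishes $g=2$ from $g\geq 3$. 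A second, related problem is the one you flag: reducibility is only local Lie equivalence to a construction, and even granting Pinkall's global criterion (image of a curvature sphere map in a codimension-two subspace), the Lie-transformed Euclidean projection of the compact Legendre lift need not be the \emph{entire} cylinder, tube, or surface of revolution, so pointwise features of the model constructions cannot be transplanted to $M$ without further work.

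The paper's proof (Cecil--Chi--Jensen \cite{CCJ2}; see also \cite[pp.~146--148]{Cec1}) avoids pointwise degeneracy arguments entirely and is topological, which is exactly where $g\geq 3$ enters. By Thorbergsson's theorem (Theorem \ref{thm:thorbergsson} and the discussion around it), a compact, connected proper Dupin hypersurface is taut and divides $S^n$ into two ball bundles over its focal submanifolds, so its $\mathbf{Z}_2$-homology is that of an isoparametric hypersurface with $g$ principal curvatures (in particular its total $\mathbf{Z}_2$-Betti number is $2g$, by M\"{u}nzner). On the other hand, a compact hypersurface produced by one of Pinkall's constructions (\ref{eq:5.97}) carries, up to homology, a product-type structure coming from the profile submanifold and the new spherical factor, and one checks that this homology is incompatible with the homology forced by tautness once $g\geq 3$; the case $g=2$ (the torus) is precisely the case that passes the homological test. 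So the correct mechanism is a comparison of $\mathbf{Z}_2$-homology, not a nonempty degeneracy locus; to repair your proposal you would need to replace your third step by this homological comparison, at which point the signature-by-signature normalization in your second step becomes largely unnecessary.
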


The proof uses known facts about the topology of a compact proper Dupin hypersurface
and the topology of a compact hypersurface obtained by one of Pinkall's constructions
(see \cite{CCJ2} or \cite[pp. 146--148]{Cec1} for a complete proof).\\

\begin{remark}
\label{rem:irreducibility}
{\rm From Theorem \ref{thm:CCJ-2007}, we see that 
one approach to obtaining classifications of compact proper Dupin hypersurfaces with more
than two principal curvatures is by assuming that the hypersurface is irreducible and 
then working locally in the context of Lie sphere geometry using the method of moving frames.  This approach has been used successfully in the papers of 
Pinkall \cite{P1}, \cite{P3}--\cite{P4}, Cecil and Chern \cite{CC2}, Cecil and Jensen \cite{CJ2}--\cite{CJ3}, 
and Cecil, Chi and Jensen \cite{CCJ2}.  We will discuss this in more detail in Section \ref{sec:4}
(see also \cite[pp. 168--190]{Cec1}).}
\end{remark}

\section{Isoparametric hypersurfaces}
\label{sec:3a}
In this section, we briefly review the theory of isoparametric hypersurfaces in real
space forms. This leads to many important examples of compact proper Dupin hypersurfaces.
Recall that an immersed hypersurface $M$ in a real space form,
${\bf R}^n$, $S^n$, or real hyperbolic space $H^n$,
is said to be {\em isoparametric} if it has constant principal curvatures.

A connected isoparametric hypersurface $M$ in ${\bf R}^n$ can have at most
two distinct principal curvatures, and $M$ must be an open 
subset of a hyperplane, hypersphere or a spherical cylinder
$S^k \times {\mathbf R}^{n-k-1}$.  This was shown by Levi--Civita \cite{Lev}
for $n=3$ and by B. Segre \cite{Seg} for arbitrary $n$. 

E. Cartan  \cite{Car2}--\cite{Car5} began the study of isoparametric hypersurfaces in the other space forms in a series of four papers in the 1930's.  In hyperbolic space
$H^n$, he showed that a connected isoparametric hypersurface can have at most two distinct
principal curvatures, and it is either totally umbilic or else an open subset of a standard 
product $S^k \times H^{n-k-1}$ in $H^n$
(see also Ryan \cite[pp. 252--253]{Ryan3} or Cecil-Ryan \cite[pp. 97--98]{CR8}).

In the sphere $S^n$, however, Cartan showed that there are many more possibilities.
He found examples of isoparametric hypersurfaces in $S^n$ with $1,2,3$ or 4 distinct principal curvatures, and he
classified isoparametric hypersurfaces in $S^n$ with $g \leq 3$ principal curvatures, as we now describe.

Let $M \subset S^n$ be a connected isoparametric hypersurface.
If $g=1$, then $M$ is totally umbilic, and it must be an open subset of a
great or small sphere.  If $g=2$, then $M$ must be an open subset of a standard product of two spheres,
\begin{displaymath}
S^p(r) \times S^{n-p-1}(s) \subset S^n, \quad r^2+s^2=1,
\end{displaymath}
where $1 \leq p \leq n-2$.

In the case $g=3$, Cartan \cite{Car3}
showed that all the principal curvatures must have the same multiplicity
$m=1,2,4$ or 8, and the isoparametric hypersurface must be an open subset of a tube of
constant radius 
over a standard embedding of a projective
plane ${\bf FP}^2$ into $S^{3m+1}$, 
where ${\bf F}$ is the division algebra
${\bf R}$, ${\bf C}$, ${\bf H}$ (quaternions),
${\bf O}$ (Cayley numbers), for $m=1,2,4,8,$ respectively (see also Cecil-Ryan \cite[pp. 151--155]{CR8}).  
Thus, up to congruence, there is only one such parallel family of isoparametric hypersurfaces for each value of $m$.

Cartan's theory was further developed by Nomizu \cite{Nom3}--\cite{Nom4},
Takagi and Takahashi \cite{TT}, Ozeki and Takeuchi \cite{OT}--\cite{OT2}, 
and by M\"{u}nzner \cite{Mu}--\cite{Mu2}, who proved the following fundamental result
(see also Chapter 3 of Cecil--Ryan
\cite{CR8} or the survey article by Thorbergsson \cite{Th6}).  Note that M\"{u}nzner's papers were published
in 1980--1981, although the first preprints of these papers appeared in 1973, and the results were used by other researchers shortly after that.

\begin{theorem}
\label{thm:Mu}
(M\"{u}nzner) The number $g$ of distinct principal curvatures of a connected isoparametric
hypersurface $M \subset S^n$ must be $1,2,3,4$ or $6$.
\end{theorem}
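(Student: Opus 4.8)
The plan is to convert the purely metric hypothesis (constant principal curvatures) into a statement about the topology of $M$ and of its focal sets, and then to extract the arithmetic restriction on $g$ from that topology. First I would fix the geometry. Write the $g$ distinct constant principal curvatures as $\cot\theta_1,\dots,\cot\theta_g$ with $0<\theta_1<\cdots<\theta_g<\pi$, and let $m_1,\dots,m_g$ be their multiplicities, so that $m_1+\cdots+m_g=n-1$. Since the hypersurface is isoparametric it is in particular proper Dupin, so each principal curvature is constant along its curvature surfaces and the focal map $f_{\theta_i}$ of \eqref{eq:focal-map} has constant-dimensional image for each $i$; moreover the parallel hypersurfaces $f_t$ obtained by moving a fixed oriented distance along $\xi$ are again isoparametric. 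I would first verify that as $t$ ranges over an interval these parallels sweep out all of $S^n$, degenerating at exactly two values onto two smooth focal submanifolds $M_+$ and $M_-$ of codimensions $m_1+1$ and $m_g+1$. This Morse--Bott picture is what controls everything below.

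The second step is the two structural identities that M\"{u}nzner extracts from the Codazzi equations together with the parallel structure: the angles are equally spaced,
\begin{equation}
\theta_i=\theta_1+\frac{(i-1)\pi}{g},\qquad i=1,\dots,g,
\end{equation}
and the multiplicities are periodic of period two, $m_i=m_{i+2}$ with indices read mod $g$, so that at most two multiplicities $m_1,m_2$ occur and they alternate. These identities let me build the Cartan--M\"{u}nzner polynomial: a homogeneous polynomial $F$ of degree $g$ on ${\bf R}^{n+1}$ satisfying $|\nabla F|^2=g^2|x|^{2g-2}$ and $\Delta F=\tfrac12 g^2(m_2-m_1)|x|^{g-2}$, whose restriction $f=F|_{S^n}$ is an isoparametric function having the family $\{f_t\}$ as its level sets and $M_\pm=f^{-1}(\pm 1)$ as its only critical levels. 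After this step the whole problem is encoded in the single function $f$, and $g$ is literally its degree.

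The third step is topological. Since $M$ separates $S^n$ into two closed regions $B_+$ and $B_-$ that are disk bundles over $M_+$ and $M_-$ respectively, with $B_+\cap B_-=M$, I would run a Mayer--Vietoris / Morse--Bott argument with ${\bf Z}_2$ coefficients to compute $H^*(M;{\bf Z}_2)$, $H^*(M_+;{\bf Z}_2)$ and $H^*(M_-;{\bf Z}_2)$ simultaneously. The triviality of $H^*(S^n;{\bf Z}_2)$ in the intermediate degrees is an extremely rigid constraint, and the outcome I would aim to establish is M\"{u}nzner's counting result that the total ${\bf Z}_2$--Betti number of $M$ equals $2g$, together with the fact that these Betti numbers occur in a periodic pattern governed by $m_1+m_2$ and that the cohomology ring carries a nondegenerate Poincar\'e-duality pairing.

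The hard part is the final arithmetic. Having reduced matters to a graded ${\bf Z}_2$--algebra $H^*(M;{\bf Z}_2)$ of total dimension $2g$, concentrated in a periodic sequence of degrees and equipped with Poincar\'e duality and the multiplicative structure inherited from the disk-bundle decomposition, one must show that such an algebra can exist only when $g\in\{1,2,3,4,6\}$. This is where all the content lives: the requirement that the generators multiply consistently and that the duality pairing match the bottom and top halves of the cohomology forces a divisibility and consistency condition relating $g$ to the alternating multiplicities, and the values $1,2,3,4,6$ are exactly the admissible solutions. I expect the combinatorics of matching the two ends of the disk-bundle decomposition --- not the differential geometry --- to be the genuine obstacle, and I would isolate it as a self-contained algebraic lemma about truncated Poincar\'e-duality algebras before returning to conclude the theorem.
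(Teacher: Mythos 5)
Your proposal follows exactly the route the paper attributes to M\"{u}nzner and sketches around Theorem \ref{thm:Mu}: reduction to the compact case via the Cartan--M\"{u}nzner polynomial (equally spaced $\theta_i$, period-two multiplicities, $|\nabla F|^2 = g^2|x|^{2g-2}$, $\Delta F = \tfrac12 g^2(m_2-m_1)|x|^{g-2}$), the decomposition of $S^n$ into two ball bundles over the focal submanifolds $M_\pm$, and the ${\bf Z}_2$-cohomology computation giving total Betti number $2g$ whose ring-theoretic constraints force $g \in \{1,2,3,4,6\}$. All the intermediate facts you state are correct, and the ``self-contained algebraic lemma'' you defer is precisely the lengthy, delicate cohomological computation the paper cites from M\"{u}nzner \cite{Mu}--\cite{Mu2} rather than reproducing; the only small adjustment is that for a merely connected $M$ you should construct the polynomial $F$ before asserting the global parallel sweep-out, since completeness of the family comes from the algebraicity.
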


M\"{u}nzner first showed that every connected isoparametric hypersurface in $S^n$ is algebraic, and 
that it is 
an open subset of a unique compact, connected isoparametric hypersurface in $S^n$.
The proof of Theorem \ref{thm:Mu} then begins with the assumption that $M \subset S^n$ is a compact, connected isoparametric hypersurface.  M\"{u}nzner showed that
$M$ determines a family of parallel
isoparametric hypersurfaces together with two focal submanifolds, $M_+$ and $M_-$, and the sphere $S^n$ is the union of these parallel hypersurfaces and the two focal submanifolds.  
M\"{u}nzner also showed that all
of the parallel hypersurfaces and the two focal submanifolds are connected.
We will call such a family  of parallel isoparametric hypersurfaces together with the two focal submanifolds an
{\em isoparametric family} of hypersurfaces.

M\"{u}nzner's proof of Theorem \ref{thm:Mu}
involves a lengthy, delicate computation using the cohomology rings of $M$ and its two focal submanifolds in $S^n$.  The structure of these cohomology rings is determined by the topological situation 
that $M  \subset S^n$ divides $S^n$ into
two ball bundles over the two focal submanifolds of $M$ in $S^n$.
These two focal submanifolds, $M_+$ and $M_-$, lie in different components of the complement of $M$ in $S^n$.

This topological
situation has been used by various authors to determine the possible multiplicities of the principal curvatures of an isoparametric hypersurface with $g=4$ or $g=6$
principal curvatures.  This same topological situation also holds for compact, connected proper Dupin
hypersurfaces embedded in $S^n$, and thus the same restrictions on the number $g$ of distinct 
principal curvatures and their
multiplicities also hold on those hypersurfaces, as we will describe in Section \ref{sec:4}.

Much progress has been made on the study of isoparametric hypersurfaces in $S^n$ with $g=4$ or $g=6$ 
principal curvatures, as we will now describe.
In the case $g=4$, 
M\"{u}nzner proved that
the principal curvatures can have at most two
distinct multiplicities $m_1,m_2$. Ferus, Karcher and M\"{u}nzner
\cite{FKM} then used representations
of Clifford algebras to construct for every positive integer
$m_1$ an infinite series of isoparametric families of hypersurfaces
with four principal curvatures having respective multiplicities
$(m_1,m_2)$, where $m_2$ is nondecreasing and
unbounded in each series (see also \cite[pp. 95--112]{Cec1}, \cite[pp. 162--180]{CR8}).   This class of
{\it FKM-type} isoparametric hypersurfaces contains
all examples of isoparametric
hypersurfaces with four principal curvatures with the exception of two
homogeneous families, which have multiplicities $(2,2)$ and $(4,5)$.
For the FKM-type examples, one of the focal submanifolds is always a
{\em Clifford-Stiefel manifold}, as described by Pinkall and Thorbergsson \cite{PT1}
(see also \cite[p. 174]{CR8}).

Stolz \cite{Stolz} then proved that the multiplicities
of the principal curvatures of an isoparametric
hypersurface with four principal curvatures
must be the same as those of an isoparametric hypersurface of
FKM-type or one of the two homogeneous exceptions.
Cecil, Chi and Jensen \cite{CCJ1} next
showed that if the multiplicities satisfy $m_2 \geq 2 m_1 - 1$, then the
isoparametric hypersurface must be of FKM-type
(see also Immervoll \cite{Im} for a different proof of this result).
Taken together with known classification results of Takagi \cite{Takagi} for the case $m_1 = 1$,
and Ozeki and Takeuchi \cite{OT}--\cite{OT2} for the case $m_1 = 2$, this handles all
possible pairs of multiplicities except for four cases, the homogeneous pair $(4,5)$ and the FKM pairs
$(3,4), (6,9)$ and $(7,8)$.  

Following this, Chi handled the remaining open cases in a series of papers \cite{Chi2}--\cite{Chi4}
to complete the classification in the case $g=4$.  This showed that
every isoparametric hypersurface with four principal curvatures is either of FKM-type or 
else a homogeneous example with  multiplicities $(2,2)$ or $(4,5)$.

In the case of $g=6$ principal curvatures, M\"{u}nzner \cite{Mu}--\cite{Mu2} showed that all the principal curvatures
have the same multiplicity $m$, and Abresch \cite{Ab} showed that $m$ equals 1 or 2. 
Takagi and Takahashi \cite{TT} found homogeneous isoparametric families in both cases $m=1$ and $m=2$, and they showed that up to congruence, there is only one homogeneous family of isoparametric
hypersurfaces in each case.
 
Dorfmeister and Neher \cite{DN5} then proved in 1985 that
every isoparametric hypersurface $M^6 \subset S^7$ with $g=6$ principal curvatures of multiplicity $m=1$
is homogeneous. Their proof is very algebraic in nature, and later Miyaoka
\cite{Mi10} and Siffert \cite{Siffert1} gave alternative approaches to the proof in the case  $m=1$.

Regarding the case $m=2$, Miyaoka \cite{Mi11}
published a proof in 2013 that every isoparametric hypersurface with $g=6$ principal curvatures of multiplicity $m=2$
is homogeneous (see also the errata \cite{Mi12}).  
The errata pertain to an error in Miyaoka's original proof that
was pointed out by Abresch and Siffert.
(See also the papers of
Siffert \cite{Siffert1}--\cite{Siffert2} for more on the case $g=6$.)

\begin{remark}[Chern conjecture for isoparametric hypersurfaces]
\label{rem:Chern-conjecture}

\noindent
{\rm An active research problem in the area of isoparametric hypersurfaces is
the Chern conjecture for isoparametric hypersurfaces, which states that every closed minimal hypersurface 
immersed into the sphere with constant scalar curvature is isoparametric.  See
the papers of Scherfner and Weiss \cite{S-W}, Scherfner, Weiss and Yau \cite{S-W-Y}, 
and Ge and Tang \cite{Ge-Tang-2012},  
for information on this conjecture and its generalizations.}
\end{remark}

\begin{remark}[Real hypersurfaces in complex space forms]
\label{rem:complex-case}

\noindent
{\rm The study of real hypersurfaces with constant principal curvatures in complex projective space ${\bf CP}^n$ and complex hyperbolic space ${\bf CH}^n$ began at approximately the same time as M\"{u}nzner's  work on
isoparametric hypersurfaces in spheres, and it is an active area
of research.  An important early
work was Takagi's classification \cite{takagi1973} in 1973 of homogeneous real hypersurfaces
in ${\bf CP}^n$.  These hypersurfaces necessarily have constant principal curvatures, and they serve
as model spaces for many subsequent classification theorems.  Later Montiel \cite{montiel}
provided a similar list of standard examples in complex hyperbolic space ${\bf CH}^n$. 
This subject is described in great detail in Chapters 6--9 of the book of Cecil and Ryan
\cite{CR8}.}
\end{remark}

\section{Compact proper Dupin hypersurfaces}
\label{sec:4}
In this section, we discuss results on the classification of compact proper Dupin hypersurfaces.  
Of course, compact isoparametric hypersurfaces in $S^n$
have constant principal curvatures, so they are obviously compact
proper Dupin hypersurfaces in $S^n$. 

The images of compact isoparametric hypersurfaces under stereographic projection
from $S^n - \{P\}$ to ${\bf R}^n$ are compact proper Dupin hypersurfaces in ${\bf R}^n$
(see, for example, \cite[pp. 28--30]{CR8}). In addition,
since the proper Dupin property is invariant under Lie sphere transformations, 
any compact proper hypersurface in $S^n$ or ${\bf R}^n$ that is Lie equivalent to a
compact isoparametric hypersurface
in $S^n$ is a compact proper Dupin hypersurface.  This gives a large class of interesting examples of compact proper Dupin hypersurfaces.  Since the 1970's, an important research topic in this area has been to what extent there are any other examples.

Following M\"{u}nzner's work, Thorbergsson \cite{Th1} proved the following theorem which shows that 
the restriction in Theorem \ref{thm:Mu} on the number $g$ of 
distinct principal curvatures also holds for compact proper Dupin hypersurfaces embedded in $S^n$.  This is in stark contrast to Pinkall's Theorem \ref{thm:4.1.1} 
which states that
there are no restrictions on the number of distinct principal curvatures or their multiplicities for
non-compact proper Dupin hypersurfaces.

\begin{theorem}
\label{thm:thorbergsson}
(Thorbergsson, 1983) The number $g$ of distinct principal curvatures of a compact, connected
proper Dupin hypersurface
$M \subset S^n$ must be $1,2,3,4$ or $6$.
\end{theorem}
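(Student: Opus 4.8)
The plan is to show that a compact, connected proper Dupin hypersurface $M \subset S^n$ produces exactly the topological decomposition of $S^n$ on which M\"{u}nzner's proof of Theorem \ref{thm:Mu} rests, and then to apply M\"{u}nzner's cohomological computation, which uses only that decomposition and never the constancy of the principal curvatures. Since $M$ is a compact embedded hypersurface in the simply connected sphere $S^n$, Alexander duality already gives that $M$ separates $S^n$ into exactly two open components; the real content is to realize the closure of each component as a disk bundle over a focal submanifold.

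The decisive step is to prove that $M$ is \emph{taut} in $S^n$: for every point $q$ that is not a focal point of $M$, the spherical distance function $L_q$ is a perfect Morse function on $M$ with ${\bf Z}_2$ coefficients. This is where the Dupin hypotheses are used in an essential way. By the facts recalled in Section \ref{sec:1}, each principal curvature $\mu$ of multiplicity $m$ has curvature surfaces that are open subsets of $m$-dimensional metric spheres and a focal map $f_\mu$ whose image is a smooth submanifold of codimension $m+1$; consequently each focal point of $M$ occurs with multiplicity exactly $m$, and the critical submanifolds of $L_q$ are clean and nondegenerate in the normal direction. First I would carry out the Morse-theoretic count with these multiplicities to show that the critical points of $L_q$ realize the ${\bf Z}_2$-Betti numbers of $M$, so that $L_q$ is perfect and $M$ is taut.

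Next I would extract the ball-bundle decomposition from tautness together with the Dupin condition. Writing $A_+$ and $A_-$ for the closures of the two components of $S^n \setminus M$, tautness of $M$ forces each $A_\pm$ to deformation retract onto a single focal submanifold $M_\pm$, and the gradient flow of a suitable distance function identifies $A_\pm$ with the total space of the normal disk bundle $B_\pm$ of $M_\pm$ in $S^n$. The Dupin condition guarantees that $M_+$ and $M_-$ are smooth, and the connectedness of $M$ together with tautness forces them to be connected, so that $S^n = B_+ \cup_M B_-$ is precisely M\"{u}nzner's topological situation, with the fiber dimensions recording the relevant multiplicities.

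Finally, I would invoke M\"{u}nzner's argument verbatim. His computation of the ${\bf Z}_2$-cohomology rings of $M$, $M_+$, and $M_-$, and the resulting Poincar\'{e}-polynomial identities that force $g \in \{1,2,3,4,6\}$, depend only on the decomposition $S^n = B_+ \cup_M B_-$ and on the fiber dimensions, not on any assumption that the principal curvatures are constant. Applying it to the present decomposition yields the stated restriction on $g$. The main obstacle is the tautness step, and with it the clean identification of $A_\pm$ as disk bundles over connected smooth focal submanifolds; once the topology is shown to coincide with M\"{u}nzner's, the conclusion on $g$ is automatic.
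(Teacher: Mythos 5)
Your proposal follows exactly the route the paper attributes to Thorbergsson: first establish that a compact, connected proper Dupin hypersurface is tautly embedded, then use tautness to exhibit $M$ as dividing $S^n$ into two ball bundles over the first focal submanifolds $M_+$ and $M_-$, and finally apply M\"{u}nzner's cohomological argument, which depends only on that topological decomposition and not on constancy of the principal curvatures. This is essentially the same approach as the paper's proof sketch, with your Morse-theoretic details filling in the tautness step in the expected way.
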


In proving this theorem,
Thorbergsson first shows that a compact, connected proper Dupin hypersurface $M \subset S^n$
must be tautly embedded,
that is, every nondegenerate spherical distance function $L_p (x) = d(p,x)^2$,  for  $p \in S^n$,
has the minimum number of critical points required by the Morse inequalities on $M$.
Thorbergsson then uses the fact that $M$ is tautly embedded in $S^n$ to show that $M$ divides $S^n$ into
two ball bundles over the first focal submanifolds, $M_+$ and $M_-$, on either side of $M$ in $S^n$.
This gives the same topological situation as in the isoparametric case,
and the result then follows from M\"{u}nzner's  proof of Theorem \ref{thm:Mu} above.

Using Thorbergsson's results,
we can formulate the relationship between the taut and proper Dupin properties as follows.
This theorem was first proven for surfaces in $S^3$ or ${\bf R}^3$ by Banchoff \cite{Ban1}.

\begin{theorem}
\label{thm:taut-dupin}
Let  $M \subset S^n$ be a compact, connected hypersurface on which the number $g$ of distinct principal curvatures is constant.  Then $M$ is taut if and only if $M$ is proper Dupin.
\end{theorem}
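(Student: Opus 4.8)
\section*{Proof proposal}

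The plan is to prove the two implications separately. Because $g$ is assumed constant, Condition (b) holds automatically, so in each direction the content reduces to the equivalence, under constant $g$, of tautness with Condition (a). The implication \emph{proper Dupin} $\Rightarrow$ \emph{taut} requires nothing new: this is exactly the first step of Thorbergsson's proof of Theorem \ref{thm:thorbergsson}, where he shows that every compact, connected proper Dupin hypersurface in $S^n$ is tautly embedded. I would simply invoke that result, so that the real work lies entirely in the converse.

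For \emph{taut} $\Rightarrow$ \emph{Dupin}, I would first use the constancy of $g$ together with the structural facts recorded in Section \ref{sec:1}: each continuous principal curvature $\mu$ has constant multiplicity $m$, hence is smooth, and the principal distribution $T_\mu$ is a smooth foliation whose leaves are the curvature surfaces. By the summary in Section \ref{sec:1}, Condition (a) for $\mu$ is equivalent to the focal map $f_\mu$ being constant along each leaf of $T_\mu$, so it suffices to establish that. Fix $x_0 \in M$ and a principal curvature $\mu$ of multiplicity $m$, and let $p = f_\mu(x_0) \in S^n$ be the corresponding focal point. The distance function $L_p$ has a critical point at $x_0$, and a standard Hessian computation for spherical distance functions shows that its nullspace on $T_{x_0}M$ is precisely the principal space $E_\mu(x_0)$; in particular $p$ is a focal point of multiplicity $m$. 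The key input is a structure theorem of Ozawa on critical sets of distance functions to a taut submanifold: for taut $M$ and any $p \in S^n$, each connected component of the critical set of $L_p$ is a smooth submanifold that is a nondegenerate critical manifold for $L_p$, whose dimension equals the nullity of the Hessian and whose tangent space at each point is the nullspace there. Applying this, the component $C$ through $x_0$ is a smooth $m$-dimensional submanifold with $T_{x_0}C = E_\mu(x_0)$.

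Next I would propagate this identification along $C$. At each $x \in C$ the point $p$ is a focal point, so the relevant focal distance satisfies $\cot\theta_x = \mu_i(x)$ for some principal curvature $\mu_i$, with $T_xC \subseteq E_{\mu_i}(x)$. Since $\dim C = m$ and every multiplicity is constant (as $g$ is constant), a continuity and connectedness argument starting from $x_0$ forces $\mu_i = \mu$ throughout $C$, whence $T_xC = E_\mu(x)$ and $p = f_\mu(x)$ for all $x \in C$. Thus $C$ is an integral manifold of $T_\mu$, i.e.\ an open piece of the curvature surface through $x_0$, on which $f_\mu \equiv p$. As $x_0$ was arbitrary, $f_\mu$ is locally constant along each leaf of $T_\mu$, hence constant on each connected leaf; this is Condition (a). Combined with the hypothesis that $g$ is constant, $M$ is proper Dupin.

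The main obstacle is the passage from tautness to the structure theorem for the critical sets of $L_p$. Tautness is phrased only for the \emph{nondegenerate} distance functions, via the Morse inequalities, whereas the argument above needs precise control of the \emph{degenerate} function $L_p$ when $p$ is a focal point, namely that each component of its critical set is a smooth manifold of the expected dimension behaving like a nondegenerate critical manifold. This is exactly where Ozawa's analysis is required, and the dimension bookkeeping (nullity $=$ multiplicity $= \dim C$), enforced by the constancy of $g$, is what then pins $C$ down as a curvature surface and yields the Dupin condition.
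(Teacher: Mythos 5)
Your proposal is correct, and its skeleton coincides with the paper's: the paper's entire proof consists of citing Thorbergsson \cite{Th1} for the implication proper Dupin $\Rightarrow$ taut, citing Pinkall \cite{P5} and Miyaoka \cite{Mi2} for the statement that a taut hypersurface in $S^n$ is Dupin, and then observing, exactly as you do, that constancy of $g$ upgrades Dupin to proper Dupin. Where you genuinely differ is the converse: instead of citing Pinkall--Miyaoka you sketch a proof, and the sketch is sound. The Hessian of $L_p$ at a critical point $x_0$ with $p = f_\mu(x_0)$ has nullspace exactly $E_\mu(x_0)$ (in $S^n$ each focal point on the normal great circle corresponds to a unique principal curvature, since $\cot$ is injective on $(0,\pi)$, so nullity equals multiplicity); Ozawa's structure theorem makes the component $C$ of the critical set a nondegenerate critical manifold of dimension $m$ whose tangent space is the nullspace; and your continuity-of-labels argument along the connected set $C$ (valid because constant $g$ gives continuously ordered principal curvatures with constant multiplicities) identifies $C$ with an open piece of the leaf of $T_\mu$ on which $f_\mu \equiv p$, whence $f_\mu$ is locally constant, hence constant, on each connected leaf, which is Condition (a) by the equivalences recorded in Section \ref{sec:1}. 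This is in fact the modern textbook proof of the cited Pinkall--Miyaoka result---it is essentially the argument given in \cite[pp. 65--74]{CR8}, the very reference the paper's proof points the reader to---whereas the original proofs of Miyaoka \cite{Mi2} and Pinkall \cite{P5} predate Ozawa's theorem and proceed differently. The one caveat, which you yourself flag honestly, is that your argument is not thereby self-contained: Ozawa's theorem (not among this paper's references) is a substantial input whose proof uses tautness through $\mathbf{Z}_2$-homological injectivity of sublevel sets, so you have traded the paper's citation for another; what the trade buys is a transparent mechanism---the nullity-equals-multiplicity bookkeeping identifying critical manifolds with curvature surfaces---that the paper's citation-only proof leaves implicit.
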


\begin{proof}
As noted above, Thorbergsson \cite{Th1} proved that a 
compact, connected proper Dupin hypersurface $M \subset S^n$ must be tautly embedded.  Conversely,
Pinkall \cite{P5} and Miyaoka \cite{Mi2} (for hypersurfaces) independently proved that a taut submanifold 
embedded in $S^n$ is Dupin. Thus, a taut hypersurface $M$ is proper Dupin
if $g$ is constant on $M$ (see 
\cite[pp. 65--74]{CR8} for more discussion on the relationship between taut and Dupin submanifolds).
\end{proof}

The topological situation that $M$ divides $S^n$ into
two ball bundles over the first focal submanifolds, $M_+$ and $M_-$, on either side of $M$ in $S^n$
leads to important restrictions on the multiplicities of the principal 
curvatures of compact proper Dupin hypersurfaces, due to
Stolz \cite{Stolz} for $g=4$, and to Grove and Halperin \cite{GH} for $g=6$.
These restrictions were obtained by using advanced topological considerations in each case, and they show that 
the multiplicities of the principal curvatures of
a compact proper Dupin hypersurface embedded in $S^n$ must be the same as the multiplicities 
of the principal curvatures of some isoparametric hypersurface in $S^n$.

Grove and Halperin \cite{GH} also gave a list of the integral homology of all compact
proper Dupin hypersurfaces, and Fang \cite{Fang2} found results on the topology of compact proper Dupin hypersurfaces with $g=6$ principal curvatures.

In 1985, it was known that every compact, connected proper Dupin hypersurface 
$M \subset S^n$ (or ${\bf R}^n$) with
$g= 1,2$ or 3 principal curvatures is Lie equivalent to an isoparametric hypersurface in $S^n$.
At that time, every other known example
of a compact, connected proper Dupin hypersurface in $S^n$ 
was also Lie equivalent to an isoparametric hypersurface in $S^n$. 
This together with Thorbergsson's Theorem \ref{thm:thorbergsson} above led to the following
conjecture by Cecil and Ryan \cite[p. 184]{CR7} (which we have rephrased slightly).

\begin{conjecture} 
\label{cecil-ryan} 
(Cecil-Ryan, 1985) Every compact, connected proper Dupin hypersurface $M \subset S^n$ $($or ${\bf R}^n)$
is Lie equivalent to an isoparametric hypersurface in $S^n$.
\end{conjecture}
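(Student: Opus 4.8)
The plan is to organize the argument around the number $g$ of distinct principal curvatures, which by Thorbergsson's Theorem~\ref{thm:thorbergsson} is restricted to the list $g \in \{1,2,3,4,6\}$. The cases $g=1,2,3$ are already settled, as noted just before the statement: a compact connected proper Dupin hypersurface with $g=1$ is totally umbilic and hence a metric sphere; with $g=2$ it is a Dupin cyclide, Lie equivalent to a standard product $S^p(r) \times S^{n-p-1}(s)$; and with $g=3$ the local Lie-geometric analysis together with compactness forces Lie equivalence to one of Cartan's isoparametric families. Thus the entire content of the conjecture lies in the two remaining cases $g=4$ and $g=6$, and I would concentrate the effort there.

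For $g=4$ and $g=6$, the first structural input is irreducibility: by Theorem~\ref{thm:CCJ-2007}, any compact connected proper Dupin hypersurface with $g \geq 3$ is irreducible, so none of Pinkall's reducible constructions in equation~(\ref{eq:5.97}) can intervene, and in particular no curvature sphere map lands in a projective subspace of codimension two. This is exactly the hypothesis under which the moving-frames method in Lie sphere geometry (the approach indicated in Remark~\ref{rem:irreducibility}) becomes effective. I would pass to the Legendre lift $\lambda: M^{n-1} \to \Lambda^{2n-1}$, choose a Lie frame adapted to the $g$ curvature spheres, and write down the structure equations of the associated exterior differential system. The Dupin condition (each curvature sphere constant along its curvature surface) and the constancy of $g$ translate into algebraic and first-order relations among the connection forms; the aim is to show that these relations, together with their integrability conditions, force the local invariants to coincide with those of an isoparametric family.

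Two further ingredients feed into this local analysis. First, the multiplicity restrictions of Stolz (for $g=4$) and Grove--Halperin (for $g=6$) guarantee that the multiplicities $m_1,\ldots,m_g$ already match those of some isoparametric hypersurface in $S^n$; this pins down the combinatorial type of the frame bundle and sharply limits the cases to be examined. Second, tautness: by Theorem~\ref{thm:taut-dupin} the hypersurface is tautly embedded, so it divides $S^n$ into two ball bundles over its two focal submanifolds exactly as in the isoparametric setting of Theorem~\ref{thm:Mu}. This topological device is what I would use to globalize any local Lie equivalence and to control the topology of the leaf spaces of the principal foliations.

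The hard part will be the $g=4$ local rigidity step (and then the analogous, even more delicate, $g=6$ step). The proper Dupin condition is genuinely weaker than the isoparametric condition, so the exterior differential system a priori carries strictly more freedom than the isoparametric system does, and the central difficulty is to prove that the integrability conditions nonetheless collapse this freedom entirely, forcing the curvature-sphere configuration at every point into the rigid pattern of an isoparametric family. If such a collapse fails, the local solution space will contain non-isoparametric integral submanifolds, and the obstruction to the conjecture would be precisely the existence of a compact, irreducible proper Dupin hypersurface whose Lie-invariant local data is not that of any isoparametric family. Determining whether the $g=4,6$ systems possess this rigidity is the crux on which the whole statement turns, and it is where I would expect the argument either to succeed or to break down.
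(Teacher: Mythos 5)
Your proposal cannot be completed, because the statement you are trying to prove is false, and the paper's own discussion of Conjecture~\ref{cecil-ryan} is precisely a report of its disproof. The crux you correctly isolated---whether the exterior differential system for compact proper Dupin hypersurfaces with $g=4$ or $g=6$ is rigid enough to force the isoparametric pattern---is exactly where the argument breaks down, and it breaks down for a concrete, identifiable reason: the Lie curvature. For $g=4$ the cross-ratio $\psi$ of the four principal curvatures, as in equation~(\ref{eq:lie-curv}), is invariant under Lie sphere transformations, and M\"{u}nzner's work forces $\psi \equiv 1/2$ on any hypersurface Lie equivalent to an isoparametric one. But Pinkall and Thorbergsson \cite{PT1} produced compact proper Dupin hypersurfaces with $g=4$ (deformations of the FKM isoparametric examples) on which $\psi \neq 1/2$ at some points, and Miyaoka and Ozawa \cite{MO} produced counterexamples for both $g=4$ and $g=6$ of the form $h^{-1}(W^3) \subset S^7$, with $h$ the Hopf fibration and $W^3 \subset S^4$ compact proper Dupin but not isoparametric, again with non-constant Lie curvatures. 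So the solution space of your EDS genuinely contains compact, irreducible, non-isoparametric integral submanifolds; no amount of integrability analysis can collapse it.

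Two further points about the supporting steps. First, the topological inputs you invoke (Stolz, Grove--Halperin, tautness, and the two-ball-bundle decomposition) constrain only multiplicities and homology, and the counterexamples satisfy all of them---indeed tautness is the very mechanism Miyaoka and Ozawa use to \emph{build} their examples---so these tools cannot "globalize a local Lie equivalence" as you hope; the Lie curvature is a continuous Lie invariant invisible to this topology. Second, even after repairing the statement by adding the hypothesis of constant Lie curvature, as in Conjecture~\ref{revised-conjecture} of Cecil, Chi and Jensen, the moving-frames program you sketch has succeeded only under the multiplicity restriction $m_1 = m_3 \geq 1$, $m_2 = m_4 = 1$ (Theorem~\ref{CCJ}, using irreducibility via Theorem~\ref{thm:CCJ-2007} and Miyaoka's result that constant $\psi$ forces $\psi = 1/2$); the general case (\ref{eq:general}) and the whole of $g=6$ remain open. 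Your instinct in the final paragraph---that the argument would "either succeed or break down" at the rigidity step---was right; it breaks down, and the breakdown is witnessed explicitly by the examples above.
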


We now discuss the current status of this conjecture for each of the values
$g = 1,2,3,4,6$, respectively.  After that, we will give more detail about the individual cases of $g$.\\

\noindent
$g=1$: Conjecture is true. $M$ is totally umbilic, and so $M = S^{n-1} \subset S^n$, a metric hypersphere.  
Thus, $M$ is isoparametric.\\

\noindent
$g=2$: Conjecture is true (Cecil-Ryan \cite{CR2}, 1978). $M$ is a cyclide of Dupin.  
Specifically, $M$ is M\"{o}bius equivalent to an isoparametric hypersurface
\begin{displaymath}
S^p(r) \times S^q(s) \subset S^n, \quad r^2+s^2=1,
\end{displaymath}
where $p + q = n-1$.\\

\noindent
$g=3$: Conjecture is true (Miyaoka \cite{Mi1}, 1984). $M$ is Lie equivalent (but not necessarily 
M\"{o}bius equivalent) to an isoparametric hypersurface $W^{3m}$ in $S^{3m+1}$, for $m = 1,2,4$ or 8.\\

\noindent
$g=4$: Conjecture is false. Counterexamples due to:\\

\noindent
(a) Pinkall-Thorbergsson \cite{PT1} (1989): these counterexamples
are certain deformations of FKM-type \cite{FKM} isoparametric hypersurfaces.\\

\noindent
(b)  Miyaoka-Ozawa \cite{MO} (1989): these counterexamples are hypersurfaces of the form $M^6 = h^{-1}(W^3) \subset S^7$,
where $h:S^7 \rightarrow S^4$ is the Hopf fibration, and $W^3 \subset S^4$
is a compact proper Dupin hypersurface with 2 principal curvatures that is not isoparametric.\\

\noindent
$g=6$: Conjecture is false. Counterexamples due to:\\

\noindent
Miyaoka-Ozawa \cite{MO} (1989): these counterexamples are also hypersurfaces of the form
$M^6 = h^{-1}(W^3) \subset S^7$,
where $h:S^7 \rightarrow S^4$ is the Hopf fibration, and $W^3 \subset S^4$
is a compact proper Dupin hypersurface with 3 principal curvatures that is not isoparametric
(same construction as above).\\

We now make some comments about the individual cases based on the value of $g$.
The case $g = 1$ is simply the well-known case of totally umbilic
hypersurfaces, and thus $M$ is a great or small hypersphere in $S^n$.  

In the case $g=2$, a connected proper Dupin hypersurface with two principal curvatures
having respective multiplicities $p$ and $q$ is called a {\em cyclide of Dupin of characteristic $(p,q)$}.  Cecil and Ryan 
\cite{CR2} showed that a compact, connected
cyclide of characteristic $(p,q)$ is M\"{o}bius equivalent to a standard product of spheres
(which is an isoparametric hypersurface),
\begin{equation}
\label{eq:cyclide}
S^p (r) \times S^q (s) \subset S^n (1) \subset {\bf R}^{n+1}, \quad r^2 + s^2 = 1,
\end{equation}
where $p + q = n-1$.
The proof of Cecil and Ryan \cite{CR2} uses the assumption of the compactness of $M$
in an essential way. Later, working in the context of Lie sphere geometry,
Pinkall \cite{P4} gave a local classification of the cyclides of Dupin by showing
that every connected cyclide of characteristic $(p,q)$  is contained in a unique compact, connected
cyclide of characteristic $(p,q)$, and that any two compact, connected cyclides of characteristic $(p,q)$
are Lie equivalent to each other (see also \cite[pp. 148--159]{Cec1}).

In the case $g=3$, Miyaoka \cite{Mi1} proved that $M$ is Lie equivalent (although not necessarily
M\"{o}bius equivalent) to an isoparametric hypersurface.
Later Cecil, Chi and Jensen \cite{CCJ2} gave a different proof of this result, based on the local
theorem of Cecil and Jensen \cite{CJ2} which states that a connected irreducible proper Dupin hypersurface
in $S^n$ with three principal curvatures must be Lie equivalent to an isoparametric hypersurface in $S^n$.

As mentioned in Section \ref{sec:3a}, 
Cartan \cite{Car3} had shown earlier that an isoparametric hypersurface with $g=3$
principal curvatures is a tube over a standard
embedding of a projective plane ${\bf FP}^2$,
for ${\bf F} = {\bf R}, {\bf C}, {\bf H}$ (quaternions) or ${\bf O}$ (Cayley numbers)
in $S^4, S^7, S^{13}$
and $S^{25}$, respectively (see also \cite[pp. 151--155]{CR8}). 

The case $g=4$ resisted all
attempts at solution for several years and finally in papers published in 1989, 
counterexamples to the conjecture were constructed 
independently by Pinkall and Thorbergsson \cite{PT1},
and by Miyaoka and Ozawa \cite{MO}. These two constructions lead to different types of counterexamples, and
the method of Miyaoka and Ozawa also 
yields counterexamples to the conjecture with $g=6$ principal curvatures.  

In both constructions, a fundamental
Lie invariant, the Lie curvature introduced by Miyaoka \cite{Mi3},
was used to show that the examples are not Lie equivalent to
an isoparametric hypersurface.  Specifically, if $M$ is a proper Dupin hypersurface 
with four principal curvatures,
then the {\em Lie curvature} $\psi$ is defined to be
the cross-ratio of these principal curvatures.  If $M$ has six principal curvatures, then the Lie curvatures are the cross-ratios of the principal curvatures taken four at a time.

Viewed in the context of projective geometry, at each point $x \in M$, a Lie curvature
is the cross-ratio of four points along a projective line on $Q^{n+1}$ corresponding
to four curvature spheres of $M$ at $x$.  
Since a Lie sphere transformation maps curvature spheres to curvature
spheres and preserves cross-ratios, a Lie curvature is invariant under Lie sphere transformations
(see  \cite[pp. 72--82]{Cec1} for more detail).  

From the work of M\"{u}nzner \cite{Mu}--\cite{Mu2}, 
it is easy to show that in the case $g=4$, the 
Lie curvature $\psi$ has the constant value $1/2$ (when the principal curvatures are
appropriately ordered) on an isoparametric hypersurface.  For
the counterexamples to the conjecture, it was shown that $\psi \neq 1/2$ at some points, and therefore the examples cannot be Lie equivalent to an isoparametric hypersurface. 

The examples of Pinkall and Thorbergsson  are obtained by 
taking certain deformations of the isoparametric hypersurfaces
of FKM-type constructed by  Ferus, Karcher and M\"{u}nzner \cite{FKM} using representations of Clifford algebras.  
Pinkall and Thorbergsson proved that their examples are not Lie
equivalent to an isoparametric hypersurface by showing that the Lie curvature does not have the
constant value $\psi = 1/2$, 
as required for a hypersurface with $g=4$ that is Lie equivalent 
to an isoparametric hypersurface.  Using their methods, one can also show directly
that the Lie curvature is not constant on their examples (see \cite[pp. 309--314]{CR8}).

The construction of counterexamples to  Conjecture \ref{cecil-ryan} due to Miyaoka and Ozawa \cite{MO} (see also
\cite[pp. 117--123]{Cec1}) is based on the Hopf fibration
$h:S^7 \rightarrow S^4$.  
Miyaoka and Ozawa first show that if $W^3$ is a taut
compact submanifold of $S^4$, then $M = h^{-1}(W^3)$ is a taut compact submanifold of
$S^7$.  

Using this and the fact that tautness is equivalent to proper Dupin
for a compact, connected hypersurface in $S^n$ on which the number $g$ of distinct principal 
curvatures is constant (Theorem \ref{thm:taut-dupin}),
they show that if $W^3$ is a proper Dupin hypersurface in
$S^4$ with $g$ distinct principal curvatures, then
$h^{-1}(W^3)$ is a proper Dupin hypersurface in $S^7$ with $2g$ principal
curvatures.  

To complete the argument, they show that if a compact, connected 
hypersurface $W^3 \subset S^4$ is proper Dupin
but not isoparametric, then the Lie curvatures of
$h^{-1}(W^3)$ are not constant, and therefore $h^{-1}(W^3)$ is not Lie
equivalent to an isoparametric hypersurface in $S^7$.  For $g=2$ or 3, this gives
a compact proper Dupin hypersurface $M = h^{-1}(W^3)$ in $S^7$ with $g=4$ or 6, respectively,
that is not Lie equivalent to an isoparametric hypersurface.

As we have seen, all of the hypersurfaces described above are shown to be 
counterexamples to Conjecture \ref{cecil-ryan} by proving that they do not have
constant Lie curvatures.  This led to a revision of Conjecture \ref{cecil-ryan} by Cecil, Chi and Jensen
\cite[p. 52]{CCJ4} in 2007 that contains the additional assumption of constant Lie curvatures.
This revised conjecture
is still an open problem, although it  has been shown to be true in some cases,
which we will describe after stating the conjecture.

\begin{conjecture}
\label{revised-conjecture}
(Cecil-Chi-Jensen, 2007)
Every compact, connected proper Dupin hypersurface in $S^n$ with four or six principal curvatures
and constant Lie curvatures is Lie equivalent to an isoparametric hypersurface.
\end{conjecture}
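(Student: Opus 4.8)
The plan is to convert the global classification into a local rigidity statement in Lie sphere geometry and then to run the method of moving frames, using the constant-Lie-curvature hypothesis as the extra constraint that rules out the known counterexamples. First, since $M$ is compact, connected, and proper Dupin with $g \geq 3$, Theorem \ref{thm:CCJ-2007} shows that $M$ is irreducible. As explained in Remark \ref{rem:irreducibility}, irreducibility is exactly the hypothesis under which one may abandon the Euclidean/spherical picture and work intrinsically with the Legendre lift $\lambda : M^{n-1} \to \Lambda^{2n-1}$ and its curvature sphere maps $K_1, \ldots, K_g$, which are Lie invariant and have constant multiplicities $m_1, \ldots, m_g$. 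By Stolz \cite{Stolz} (for $g=4$) and Grove-Halperin \cite{GH} (for $g=6$), these multiplicities already coincide with those of some isoparametric family, and by Abresch \cite{Ab} all $m_i$ equal $1$ or $2$ when $g=6$; I would record these as the dimension data to be matched at the end.

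Next I would choose a Lie frame, i.e. an ordered basis of ${\bf R}^{n+3}_2$ compatible with the form (\ref{Lie-metric}), adapted so that its vectors span the line $\lambda(x)$ and realize the curvature spheres $K_i$. Pulling back the Maurer-Cartan form of $O(n+1,2)$ yields the structure equations, and the Dupin condition that each $K_i$ be constant along its own curvature surface translates into the vanishing of a prescribed block of connection forms, precisely as in the $g=3$ analysis of Cecil and Jensen \cite{CJ2}. The hypothesis that the Lie curvatures are constant then becomes a finite set of algebraic relations among the remaining frame components: each Lie curvature is the cross-ratio of four of the $K_i$ along the projective line $\lambda(x) \subset Q^{n+1}$, and its constancy is the statement that its exterior derivative vanishes identically.

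The heart of the argument is then a rigidity computation. One differentiates the constant cross-ratio relations and substitutes back into the structure equations; the integrability conditions built into the Maurer-Cartan equations should force the surviving connection forms to take exactly the values they have on an isoparametric family. For $g=4$ the constant value $\psi = 1/2$ is expected to propagate through these identities until the frame is completely determined, after which one integrates and uses irreducibility together with the algebraicity of proper Dupin hypersurfaces \cite{CCJ3} to conclude that $\lambda$ is the Legendre lift of an open subset of an isoparametric hypersurface; compactness and connectedness then upgrade this to a global Lie equivalence. The case $g=6$ follows the same outline applied to the three cross-ratios taken four at a time.

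I expect the integrability step to be the decisive obstacle — indeed it is why the conjecture remains open. The exterior differential system describing proper Dupin Legendre submanifolds with $g=4$ or $6$ is large, and the counterexamples of Pinkall-Thorbergsson \cite{PT1} and Miyaoka-Ozawa \cite{MO} show that, absent the Lie-curvature condition, it admits a positive-dimensional family of non-isoparametric solutions. The entire difficulty is to prove that the scalar constraint of constant Lie curvature collapses this family to the isoparametric frame alone. This has been carried out only for restricted multiplicities; controlling the general $g=4$ case, and still more the $g=6$, $m=2$ case — where even the isoparametric classification of Miyaoka \cite{Mi11} was delicate enough to require an erratum — is the principal barrier.
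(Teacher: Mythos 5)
The statement you were asked to prove is a conjecture, not a theorem: the paper explicitly states that Conjecture \ref{revised-conjecture} ``is still an open problem,'' so there is no proof in the paper to match, and no complete argument is currently known. Your proposal correctly assembles the standard reductions — irreducibility from compactness via Theorem \ref{thm:CCJ-2007}, the multiplicity restrictions of Stolz \cite{Stolz} and Grove--Halperin \cite{GH}, Miyaoka's result that constant $\psi$ forces $\psi = 1/2$ in the compact $g=4$ case, and the passage to the Legendre lift and moving Lie frames — and this is indeed the framework of Cecil--Chi--Jensen \cite{CCJ2}. But the heart of your argument is the sentence claiming that differentiating the constant cross-ratio relations and substituting into the Maurer--Cartan structure equations ``should force'' the connection forms to take their isoparametric values. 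That step is asserted, not performed, and it is exactly the open problem. The paper records that this computation has been carried out only under the restriction $m_1 = m_3 \geq 1$, $m_2 = m_4 = 1$ (Theorem \ref{CCJ}); when some $m_i > 1$ the exterior differential system acquires sums (absent when all multiplicities equal one) that have so far defeated the method, and for $g=6$ the only affirmative results, due to Miyaoka \cite{Mi3}--\cite{Mi4}, require additional assumptions on the intersections of leaves of the principal foliations that have never been verified in general. So what you have written is a correct research program with the decisive lemma missing — which you yourself concede in your final paragraph.

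Two smaller corrections to the outline. First, your concluding appeal to the algebraicity of proper Dupin hypersurfaces \cite{CCJ3} plays no role in the known partial result; in the proof of Theorem \ref{CCJ} the key closing tools are Pinkall's Lie-geometric criterion for reducibility \cite{P4} and Cecil's criterion for Lie equivalence to an isoparametric hypersurface \cite[p. 77]{Cec1}, and any completion of the program would need an analogous recognition step, not merely algebraicity. Second, irreducibility is not a license to ``abandon'' the spherical picture — the Legendre lift is available for any hypersurface; its role is rather as the hypothesis of the local classification theorems (e.g., Cecil--Jensen \cite{CJ2}) that one hopes to invoke, which is why Theorem \ref{thm:CCJ-2007} matters. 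Neither repair, however, touches the essential gap: the rigidity computation in the general case $m_1 = m_3 \geq 1$, $m_2 = m_4 \geq 1$, and in the $g=6$ cases $m=1,2$, remains unproved, so the proposal cannot be counted as a proof of the statement.
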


In 1989, Miyaoka \cite{Mi3}--\cite{Mi4} showed that if some additional assumptions are made regarding the intersections of the leaves of the various principal foliations, then this revised conjecture is true in both cases
$g=4$ and 6.  
Thus far, however, it has not been proven that Miyaoka's additional assumptions are satisfied in general.
 
Cecil, Chi and Jensen \cite{CCJ2} 
made progress on the revised conjecture in the case $g=4$ by using the
fact that compactness implies irreducibility for a proper Dupin hypersurface
with $g \geq 3$ (see Theorem \ref{thm:CCJ-2007}), 
and then working locally with irreducible proper hypersurfaces in the context 
of Lie sphere geometry, as described earlier in Remark \ref{rem:irreducibility}.  We now mention some notable facts in the case $g=4$.\\

\noindent
Case $g = 4$: 
There is only one Lie curvature,
\begin{equation}
\label{eq:lie-curv}
\psi = \frac{(\mu_1 -\mu_2)(\mu_4 - \mu_3)}{(\mu_1 -\mu_3)(\mu_4 - \mu_2)},
\end{equation}
when we fix the order of the principal curvatures of $M$ to be,
\begin{equation}
\label{eq:pc-order}
\mu_1 < \mu_2 < \mu_3 < \mu_4.
\end{equation}
For an isoparametric
hypersurface with four principal curvatures ordered as in equation (\ref{eq:pc-order}),
M\"{u}nzner's results \cite{Mu}--\cite{Mu2} imply that
the Lie curvature
$\psi = 1/2$, and
the multiplicities satisfy $m_1 = m_3$, $m_2 = m_4$.
Furthermore, if $M \subset S^n$ is a compact, connected proper Dupin hypersurface 
with $g=4$, then the multiplicities of the principal curvatures must be the same as those of an isoparametric hypersurface by the work of Stolz \cite{Stolz}, so they satisfy $m_1 = m_3$, $m_2 = m_4$.

Cecil-Chi-Jensen \cite{CCJ2} proved the following 
local classification of irreducible proper Dupin hypersurfaces with four principal curvatures and constant Lie curvature $\psi = 1/2$. In the case where all the multiplicities equal one, this theorem was first proven
by Cecil and Jensen \cite{CJ3}.

\begin{theorem}
\label{CCJ} 
(Cecil-Chi-Jensen, 2007)
Let $M \subset S^n$ be a connected irreducible proper Dupin hypersurface with four principal curvatures 
ordered as in equation (\ref{eq:pc-order}) having multiplicities,
\begin{equation}
\label{eq:restricted}
m_1 = m_3 \geq 1, \quad m_2 = m_4 =1,
\end{equation}
and constant Lie curvature $\psi = 1/2$. Then $M$ is Lie equivalent to an
isoparametric hypersurface in $S^n$.
\end{theorem}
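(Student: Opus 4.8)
The plan is to work entirely in the context of Lie sphere geometry, replacing $M$ by its Legendre lift $\lambda : M^{n-1} \to \Lambda^{2n-1}$ and using the method of moving frames, following the local approach of Cecil and Jensen \cite{CJ2}, \cite{CJ3} described in Remark \ref{rem:irreducibility}. The advantage is that the four curvature sphere maps $K_1, \ldots, K_4$ and the Lie curvature $\psi$ are precisely the natural Lie-invariant quantities, so the hypotheses that $\psi = 1/2$ is constant and that $\lambda$ is proper Dupin with $g = 4$ are expressed directly in Lie-invariant terms. This is also the right setting because all the known counterexamples to Conjecture \ref{cecil-ryan} with $g = 4$ fail exactly by having $\psi \neq 1/2$, so one expects the combination of constant $\psi = 1/2$ with irreducibility to supply enough rigidity.

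First I would set up an adapted Lie frame $\{Y_1, \ldots, Y_{n+3}\}$ for ${\bf R}^{n+3}_2$ along $\lambda$, chosen so that two of the frame vectors span the $2$-plane whose projectivization is the line $\lambda(x) \subset Q^{n+1}$. The four curvature spheres $K_1, \ldots, K_4$ are four points on this line, and since the Lie curvature $\psi = 1/2$ is constant, their cross-ratio is fixed; I would normalize the frame so that the $K_i$ occupy prescribed positions on $\lambda(x)$ independent of $x$. I would then record the Maurer--Cartan forms $\omega_j^i$ of the frame, subject to the symmetry relations imposed by the signature-$(n+1,2)$ metric in equation (\ref{Lie-metric}), together with the structure equations $d\omega_j^i = -\sum_k \omega_k^i \wedge \omega_j^k$.

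Next I would translate the Dupin conditions into frame language. Condition (a), that each curvature sphere map $K_i$ is constant along its curvature surface, says that the derivative of $K_i$ in the $i$-th principal directions is proportional to $K_i$ itself, and proper Dupin with $g = 4$ says the four spheres stay distinct. Together with the multiplicity pattern $m_1 = m_3 = m$, $m_2 = m_4 = 1$ and the constant-cross-ratio normalization, these produce a large collection of vanishing relations and proportionalities among the $\omega_j^i$. The core of the argument is then to differentiate these relations and feed them back through the structure equations: each integrability step forces further connection coefficients either to vanish or to be determined by the others. The irreducibility hypothesis enters decisively here. By Pinkall's criterion (Section \ref{sec:3}), no curvature sphere map $K_i$ may lie in a linear subspace of codimension two in ${\bf P}^{n+2}$, and this eliminates exactly the degenerate branches of the structure equations that would otherwise correspond to the reducible cylinder, surface-of-revolution, and tube constructions.

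The hard part will be carrying out this reduction cleanly when $m = m_1 = m_3 > 1$, since the connection forms coupling the multiplicity-$m$ curvature spheres to the rest of the frame are matrix-valued, and one must show that the relevant blocks are scalar multiples of the identity rather than arbitrary; this is precisely the step that goes beyond the all-multiplicities-one case treated by Cecil and Jensen \cite{CJ3}. Once the structure equations have been reduced far enough to show that the adapted frame may be chosen constant, it follows that the curvature spheres are parallel in the Lie sense, which is exactly the characterization of an isoparametric family of hypersurfaces; hence $\lambda$, and therefore $M$, is Lie equivalent to an isoparametric hypersurface in $S^n$.
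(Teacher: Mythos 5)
Your outline follows essentially the same route as the paper's own account of this theorem: the survey states that the proof in \cite{CCJ2} works locally with the Legendre lift via the method of moving frames in Lie sphere geometry, that the resulting large system of structure equations is significantly harder when $m_1 = m_3 > 1$ (the sums/matrix-valued blocks you flag), and that the key ingredients are precisely Pinkall's codimension-two criterion for reducibility and Cecil's criterion for Lie equivalence to an isoparametric hypersurface --- all of which you identify correctly, including the decisive role of irreducibility in killing the reducible branches. Like the survey itself, you defer the long structure-equation reduction to the cited paper, so at the level of detail the paper provides, your proposal is the paper's argument.
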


By Theorem \ref{thm:CCJ-2007} above, we know that
compactness implies irreducibility for proper Dupin hypersurfaces with more than two principal curvatures.
Furthermore, Miyaoka \cite{Mi3} proved that if $\psi$ is constant on a compact proper Dupin hypersurface $M \subset S^n$ with $g=4$, then $\psi = 1/2$ on $M$, when
the principal curvatures are ordered as in equation (\ref{eq:pc-order}).  
As a consequence, we get the following corollary of
Theorem \ref{CCJ}.

\begin{corollary}
Let $M \subset S^n$ be a compact, connected proper Dupin hypersurface with 
four principal curvatures having multiplicities
\begin{displaymath}
m_1 = m_3 \geq 1, \quad m_2 = m_4 =1,
\end{displaymath}
and constant Lie curvature $\psi$.  Then $M$ is Lie equivalent to an isoparametric hypersurface in $S^n$.
\end{corollary}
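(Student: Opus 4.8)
The plan is to assemble the corollary directly from the three results already in hand, so the argument reduces to verifying that the hypotheses of Theorem \ref{CCJ} are met. First I would observe that $M$ is a compact, connected proper Dupin hypersurface with $g = 4 \geq 3$ distinct principal curvatures; by Theorem \ref{thm:CCJ-2007}, compactness then forces $M$ to be irreducible. Theorem \ref{thm:CCJ-2007} is stated for hypersurfaces in ${\bf R}^n$, but since the Dupin and proper Dupin conditions, as well as compactness, are preserved under stereographic projection, and irreducibility is a Lie-invariant notion defined through the Legendre lift, this causes no difficulty: we may pass between $S^n$ and ${\bf R}^n$ freely.

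The second step is to pin down the value of the Lie curvature. The hypothesis supplies only that $\psi$ is constant, whereas Theorem \ref{CCJ} requires the specific value $\psi = 1/2$. Here I would invoke Miyaoka's result, cited immediately before the corollary, that on a compact proper Dupin hypersurface in $S^n$ with $g = 4$ a constant Lie curvature must equal $1/2$, once the principal curvatures are ordered as in equation (\ref{eq:pc-order}). Thus the constancy of $\psi$, together with compactness, upgrades automatically to $\psi = 1/2$.

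Finally, with the ordering (\ref{eq:pc-order}) in force and the multiplicity hypotheses $m_1 = m_3 \geq 1$, $m_2 = m_4 = 1$ matching exactly the restricted multiplicities of equation (\ref{eq:restricted}), the hypersurface $M$ now satisfies every hypothesis of Theorem \ref{CCJ}: it is connected, irreducible, proper Dupin with four principal curvatures of the prescribed multiplicities and constant Lie curvature $\psi = 1/2$. Applying Theorem \ref{CCJ} then yields that $M$ is Lie equivalent to an isoparametric hypersurface in $S^n$, which is the desired conclusion.

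I do not anticipate a genuine obstacle, since the content of the corollary lies entirely in the two substantial theorems and in Miyaoka's identity, and the work here is only the bookkeeping of matching hypotheses. The one point that deserves care — the only place where the argument could silently fail — is the ordering convention, because both Miyaoka's conclusion $\psi = 1/2$ and the multiplicity relations $m_1 = m_3$, $m_2 = m_4$ are stated relative to the ordering (\ref{eq:pc-order}). I would therefore make sure the same ordering is used throughout, so that the cross-ratio $\psi$ in (\ref{eq:lie-curv}) and the pairing of multiplicities remain consistent when the three results are combined.
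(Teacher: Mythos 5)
Your proposal is correct and takes essentially the same route as the paper, which likewise obtains the corollary by combining Theorem \ref{thm:CCJ-2007} (compactness implies irreducibility for $g \geq 3$) with Miyaoka's result that a constant Lie curvature on a compact proper Dupin hypersurface with $g = 4$ must equal $1/2$ under the ordering (\ref{eq:pc-order}), and then applying Theorem \ref{CCJ}. Your extra care about keeping the ordering convention consistent is a valid point that the paper handles implicitly.
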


\noindent
{\bf Question:}  What about the general case where $m_2$ is also allowed to be greater than one, i.e.,
\begin{equation}
\label{eq:general}
m_1 = m_3 \geq 1,\quad  \ m_2 = m_4 \geq 1,
\end{equation}
and constant Lie curvature $\psi$?\\

\noindent
Regarding this question, we note that
the local proof of Theorem \ref{CCJ} uses the method of moving frames, and it involves a large system
of equations that contains certain sums if some $m_i$ is greater than one, but no 
corresponding sums if all $m_i$ equal one.  These sums make the calculations significantly more difficult.
So far this method has not led to a proof in the general case (\ref{eq:general}),
although Cecil, Chi and Jensen were able to handle the case (\ref{eq:restricted}),
where $m_1$ is allowed to be greater than one, but $m_2$ is restricted 
to equal one.
Key elements in the proof are the Lie geometric criteria for reducibility due to Pinkall \cite{P4} (see also
\cite[pp. 141--148]{Cec1}), and for Lie equivalence to
an isoparametric hypersurface due to Cecil \cite[p. 77]{Cec1}.\\

\noindent
Case $g = 6$:
Grove and Halperin (1987) proved that if $M \subset S^n$ is a compact proper Dupin hypersurface
with $g=6$ principal curvatures, then all the principal curvatures must have the same multiplicity $m$, and
$m = 1$ or 2, as is the case for an isoparametric hypersurface, as shown by Abresch \cite{Ab}.
They also proved other topological 
results about compact proper Dupin hypersurfaces that support Conjecture \ref{revised-conjecture} in the case $g=6$.

As mentioned above,
Miyaoka \cite{Mi4} showed that if some additional assumptions are made regarding the intersections of the leaves of the various principal foliations, then Conjecture \ref{revised-conjecture} is true in the case $g=6$.  
However, it has not been proven that Miyaoka's additional assumptions are satisfied in general,
and so Conjecture \ref{revised-conjecture} remains as an open problem in the case $g=6$.

\noindent Thomas E. Cecil

\noindent Department of Mathematics and Computer Science

\noindent College of the Holy Cross

\noindent Worcester, MA 01610

\noindent email: tcecil@holycross.edu\\

\end{document}